\newtheorem{deff}{Definition}[section]
\newtheorem{theorem}[deff]{Theorem}%[subsection]
\newtheorem{corollary}[deff]{Corollary}%[subsection]
\newtheorem{proposition}[deff]{Proposition}
\newtheorem{Claim}[deff]{Claim}
\newtheorem{em-example}[deff]{Example}
\newtheorem{em-def}[deff]{Definition}        % definition(auxiliary)
\newtheorem{em-remark}[deff]{Remark}         % remark(auxiliary)
\newtheorem{em-question}[deff]{Question}
\newtheorem{problem}[deff]{Problem}
\newenvironment{example}{\begin{em-example} \em }{ \end{em-example}}
\newenvironment{remark}{\begin{em-remark} \em }{\end{em-remark}}
\newcommand{\acal}{{\mathcal A}}
\newcommand{\dcal}{\Delta}
\DeclareMathSymbol{\res}{\mathord}{AMSa}{"16}
\def\:{\nobreak \hskip .1111em\mathpunct {}\nonscript \mkern
   -\thinmuskip {:}\hskip .3333emplus.0555em\relax}
\def\N{{\mathbb N}}
\def\R{{\mathbb R}}
\def\P{{\mathbb P}}
\def\cont{\mathfrak c}
\begin{document}
\title[A characterization of $X \in \dcal$]{A characterization of $X$ for which spaces $C_p(X)$ are distinguished and its applications}
\date{\today}

\author{Jerzy K\c akol}
\address{ Faculty of Mathematics and Informatics, A. Mickiewicz University,
61-614 Pozna\'{n}, Poland and Institute of Mathematics Czech Academy of Sciences, Prague, Czech \newline Republic}
\email{kakol@amu.edu.pl}
\author{Arkady Leiderman}
\address{Department of Mathematics, Ben-Gurion University of the Negev, Beer Sheva,\newline Israel}
\email{arkady@math.bgu.ac.il}
\keywords{Distinguished locally convex space, scattered compact space, $\Delta$-set, Isbell--Mr\'owka space}
\subjclass[2010]{54C35, 54G12, 54H05, 46A03}

\begin{abstract}
We prove that
the locally convex space $C_{p}(X)$ of continuous real-valued functions on a Tychonoff space $X$
 equipped with the topology of pointwise convergence  is distinguished
if and only if $X$ is a $\dcal$-space in the sense of \cite {Knight}.
 As an application of this characterization theorem we obtain the following results:

1) If $X$ is a \v{C}ech-complete (in particular, compact) space such that $C_p(X)$ is distinguished, then $X$ is scattered.
2) For every separable compact space of the Isbell--Mr\'owka type $X$, the space $C_p(X)$ is distinguished.
3) If $X$ is the compact space of ordinals $[0,\omega_1]$, then $C_p(X)$ is not distinguished.

We observe that the existence of an uncountable separable metrizable space $X$ such that $C_p(X)$ is distinguished,
 is independent of  ZFC.
We explore  also the question to which extent the class of $\dcal$-spaces is invariant under
basic topological operations.
\end{abstract}

\thanks{The research for the first named author is supported  by the GA\v{C}R project 20-22230L and RVO: 67985840.
He thanks  also the Center For Advanced Studies in Mathematics of Ben-Gurion University of the Negev for financial support during his visit in 2019.}

\maketitle
%%%%%%%%%%%%%%%%%%%%%%%%%%%%

\section{Introduction}\label{intro}

Following  J. Dieudonn\'{e} and
L. Schwartz  \cite{dieudonne} a locally convex space (lcs) $E$ is called \emph{distinguished} if every bounded subset of the bidual of $E$
 in the weak$^{*}$-topology is contained in the closure of the weak$^{*}$-topology  of  some bounded subset of $E$. 
Equivalently, a lcs $E$ is distinguished if and only if the strong dual of $E$  (i.e. the topological  dual of $E$ endowed with the strong topology) 
is \emph{barrelled}, (see \cite[8.7.1]{Ko}).  A. Grothendieck  \cite{grothendieck} proved that a metrizable lcs $E$ is distinguished 
if and only if its strong dual is \emph{bornological}. 
 We refer the reader to survey articles \cite{BB1} and \cite{BB2} which present several more modern results about distinguished metrizable and Fr\'echet lcs.

Throughout the article, all topological spaces are assumed to be Tychonoff and infinite.
By $C_{p}(X)$ and $C_{k}(X)$  we mean the spaces of all real-valued continuous functions on a Tychonoff space $X$
 endowed with the topology of pointwise convergence and the compact-open topology, respectively.
By a \emph{bounded set} in a topological vector space (in particular, $C_{p}(X)$) we understand any set which is absorbed by every $0$-neighbourhood.

For spaces $C_{p}(X)$ we proved in \cite{FKLS} the following theorem (the equivalence $(1) \Leftrightarrow (4)$ has been obtained in \cite{fe-ka}).
\begin{theorem}\label{Theor:characterization}
For a Tychonoff space $X$, the following conditions are equivalent{\rm:}
\begin{enumerate}
\item[{\rm (1)}] $C_p(X)$ is distinguished.
\item[{\rm (2)}]  $C_{p}\left( X\right) $ is a large subspace of
$\mathbb{R}^{X}$, i.e. for every bounded set $A$ in $\mathbb{R}^{X}$ there exists a bounded set $B$ in $C_{p}(X)$ such that $A\subset cl_{\R^X}(B)$.
\item [{\rm (3)}] For every  $f \in \R^X$ there is a bounded set $B \subset C_p(X)$ such that
$f \in cl_{\R^X}(B)$.
\item[{\rm (4)}] The strong dual of the space $C_{p}(X)$ carries the finest locally convex topology.
\end{enumerate}
\end{theorem}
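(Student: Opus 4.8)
The plan is to work entirely in the duality between $E:=C_p(X)$ and its topological dual. First I would record the three structural facts on which everything rests: (i) $E'$ is the space $L(X)$ of finitely supported functionals, i.e. finite linear combinations $\sum_i a_i\delta_{x_i}$ of point evaluations; (ii) a set $B\subseteq E$ is bounded precisely when it is pointwise bounded, $\sup_{f\in B}|f(x)|<\infty$ for every $x\in X$, the same description holding for bounded subsets of $\R^X$, of which $C_p(X)$ is a dense subspace with completion $\R^X$; and (iii) the bidual $E''=(E'_\beta)'$ is the full algebraic dual of $L(X)$, namely $\R^X$ acting by $\langle g,\sum_i a_i\delta_{x_i}\rangle=\sum_i a_ig(x_i)$, with $\sigma(E'',E')$ equal to the product topology. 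The only point in (iii) needing work is that every $g\in\R^X$ is $\beta$-continuous on $L(X)$: for this I would take the bounded set $B_\psi=\{f\in C(X):|f|\le\psi\}$ with $\psi=|g|+1$ and compute its polar, using that finitely many prescribed values $|c_i|\le\psi(x_i)$ are realized by a continuous function that is tiny off those points (Tychonoff), to obtain $\sup_{\mu\in B_\psi^\circ}|\langle g,\mu\rangle|\le\sup_x|g(x)|/\psi(x)<1$.

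With this in hand, $(1)\Leftrightarrow(2)$ becomes a direct translation. Since $E$ is distinguished iff its strong dual $E'_\beta$ is barrelled, and a locally convex space is barrelled iff every weak$^*$-bounded subset of its dual is equicontinuous, I apply this to $G:=E'_\beta$. By (iii) the weak$^*$-bounded subsets of $G'=E''=\R^X$ are exactly the pointwise bounded subsets, while the equicontinuous subsets are exactly those contained in the bipolar of a bounded $B\subseteq E$, i.e. in $\operatorname{cl}_{\R^X}(B)$ once $B$ is taken absolutely convex (the absolutely convex hull of a pointwise bounded set is again pointwise bounded). Thus $G$ being barrelled says precisely that every bounded $A\subseteq\R^X$ lies in $\operatorname{cl}_{\R^X}(B)$ for some bounded $B\subseteq C_p(X)$, which is (2).

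For $(1)\Leftrightarrow(4)$ I would argue through Mackey topologies. One direction is soft: the finest locally convex topology is barrelled, so $(4)\Rightarrow(1)$. Conversely, by (iii) the topology $\beta$ on $L(X)$ is compatible with the dual pair $\langle L(X),\R^X\rangle$, and for $L(X)=\bigoplus_{x\in X}\R$ the finest locally convex topology coincides with the Mackey topology $\tau(L(X),\R^X)$ of this pair (both have the full algebraic dual as dual, and the finest locally convex topology is the finest of all). Since every barrelled space carries its Mackey topology, the assumption that $E'_\beta$ is barrelled forces $\beta=\tau(L(X),\R^X)$, which is the finest locally convex topology; this is (4).

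Finally $(2)\Leftrightarrow(3)$. The implication $(2)\Rightarrow(3)$ is immediate, since a singleton is bounded in $\R^X$. The reverse implication is where I expect the real difficulty. Reducing (2) to boxes, a bounded $A\subseteq\R^X$ lies in $P=\prod_{x}[-h(x),h(x)]$ with $h(x)=\sup_{g\in A}|g(x)|$, and one must produce a single bounded $B\subseteq C_p(X)$ with $A\subseteq\operatorname{cl}_{\R^X}(B)$ out of the merely pointwise information supplied by (3), which gives a separate bounded set $B_g$ for each $g$. The obstruction is genuine and topological: approximating $g$ on a finite set $F$ by a continuous function is easy, but keeping all these approximants inside one fixed bounded set forces them to respect a common, possibly discontinuous, bound, and the naive remedy of truncating by $h$ destroys continuity when $h$ is not continuous. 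My plan would be to fix $h$, apply (3) to the sign-variants $\varepsilon\cdot h$, and assemble $B$ as the absolutely convex hull of a carefully chosen family of continuous functions realizing the required coordinatewise-independent values on finite sets while staying within a uniform bound; controlling this uniform bound is exactly the step I expect to be hard, and it is precisely here that the fine topological structure of $X$ (ultimately the $\dcal$-space condition) must enter.
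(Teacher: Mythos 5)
Your proposal stands or falls with your structural fact (iii) --- that the bidual $E''=(E'_\beta)'$ of $E=C_p(X)$ is \emph{always} all of $\R^X$ --- and this fact is false; worse, it is equivalent to the hardest content of the theorem you are proving. Indeed, $g\in\R^X$ is $\beta$-continuous on $L(X)$ iff $|g|\le C$ on the polar $B^{\circ}$ of some bounded $B\subset C_p(X)$, i.e.\ (by the bipolar theorem, since $\sigma(\R^X,L(X))$ is the product topology) iff $g$ lies in $\mathrm{cl}_{\R^X}$ of the absolutely convex hull of a bounded subset of $C_p(X)$ --- and that hull is again bounded, so this is exactly condition (3) for $g$. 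Thus $E''=\R^X$ holds iff (3) holds for every $g$, i.e.\ iff $C_p(X)$ is distinguished: assuming (iii) begs the question. Your polar estimate for $B_\psi=\{f\in C(X):|f|\le\psi\}$, $\psi=|g|+1$, fails at the interpolation step: a continuous $f$ with $|f(x_i)|$ close to $\psi(x_i)$ must stay large on a \emph{neighbourhood} of $x_i$, while $\psi$, being as discontinuous as $g$, may drop to $1$ arbitrarily close to $x_i$; ``tiny off those points'' does not keep $f$ under $\psi$. Concretely, take $X=[0,1]$ partitioned into dense sets $\{A_n:n\in\omega\}$ and $g=n$ on $A_n$. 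Then every $f\in B_\psi$ satisfies $|f|\le 1$ on the dense set $A_0$, hence everywhere by continuity, so $\delta_x\in B_\psi^{\circ}$ for every $x$ and $\sup_{\mu\in B_\psi^{\circ}}|\langle g,\mu\rangle|=\infty$, the opposite of your claimed bound. In fact $g$ is unbounded on \emph{every} polar: $s(x)=\sup_{f\in B}|f(x)|$ is lower semicontinuous and would have to dominate $|g|/C-1$, so each set $\{s>m\}$ would be open and dense, and the Baire category theorem yields a point where $s=\infty$, contradicting boundedness of $B$ --- the same Baire argument as in the paper's Theorem 3.5, which is no accident, since by Theorems 2.2 and 3.5 $C_p([0,1])$ is not distinguished. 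With (iii) gone, your proofs of $(1)\Leftrightarrow(2)$ and of $(1)\Rightarrow(4)$ (which needs $\beta$ to be compatible with the pair $\langle L(X),\R^X\rangle$) both collapse; only $(4)\Rightarrow(1)$ and $(2)\Rightarrow(3)$ survive as written.

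The second gap is your reading of $(3)\Rightarrow(2)$: you leave it unproved and predict that the $\Delta$-space condition ``must enter,'' but the four conditions are equivalent for \emph{every} Tychonoff $X$ (the $\Delta$-condition enters only in Theorem 2.2, in deciding whether they hold), so no hypothesis on $X$ can be needed there. And indeed $(3)\Rightarrow(2)$ has a short proof by multipliers rather than truncation: given bounded $A\subset\R^X$, put $h(x)=\sup_{g\in A}|g(x)|$, use (3) to find bounded $B$ with $h\in \mathrm{cl}_{\R^X}(B)$, and set $B'=\{uf:\ f\in B,\ u\in C(X),\ |u|\le 1\}$, which is bounded since $|uf|\le |f|$. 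For $g\in A$, a finite $F\subset X$ and $\eps>0$, pick $f\in B$ with $|f-h|<\eps$ on $F$, and choose $u\in C(X)$, $|u|\le 1$, taking at each $x\in F$ the value $g(x)/f(x)$ clipped to $[-1,1]$ (value $0$ where $f(x)\le 0$, which forces $|g(x)|<\eps$); then $|uf-g|\le\eps$ on $F$. The interpolation is now legitimate because the global bound is the \emph{constant} $1$ --- precisely the continuity obstruction that destroyed your step (iii) disappears. Finally, note that the paper itself does not prove Theorem 1.1 but imports it from \cite{FKLS} and \cite{fe-ka}, so there is no in-paper argument to compare against; measured against those sources, your dual-pair scaffolding (facts (i)--(ii), the barrelledness translation, $(4)\Rightarrow(1)$) is sound, but the genuine content --- that distinguishedness forces $E''=\R^X$ --- is exactly what your fact (iii) assumes.
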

Several examples of $C_{p}(X)$ with(out) distinguished property have been provided in papers \cite{fe-ka}, \cite{fe-ka-sa} and \cite{FKLS}. 
The aim of this research is to continue our initial work on distinguished spaces $C_{p}(X)$.

The following concept plays a key role in our paper.  We show its applicability for the studying of distinguished spaces $C_{p}(X)$.
%\newpage
\begin{deff}{\rm (\cite{Knight})}\label{def:Delta}
A topological space $X$ is said to be a $\Delta$-space if for every decreasing sequence $\{D_n: n \in \omega\}$
of subsets of $X$ with empty intersection, there is a decreasing sequence $\{V_n: n \in \omega\}$ consisting of open
subsets of $X$, also with empty intersection, and such that $D_n \subset V_n$ for every $n \in \omega$.
\end{deff}

We should mention that R. W. Knight \cite{Knight} called all topological spaces $X$ satisfying the above Definition \ref{def:Delta} by $\Delta$-sets.
The original definition of a $\Delta$-set of the real line $\R$ is due to G. M. Reed and E. K. van Douwen (see \cite{Reed}).
In this paper, for general topological spaces satisfying Definition \ref{def:Delta} we reserve the term \emph{$\Delta$-space}. 
The class of all $\Delta$-spaces is denoted by $\Delta$.

In Section \ref{description} we give an intrinsic description of all spaces $X \in \Delta$.
One of the main results of our paper, Theorem \ref{Theor:description} says that $X$ is a $\Delta$-space
 if and only if $C_{p}(X)$ is a distinguished space. 
This characterization theorem has been applied systematically for obtaining a range of results from our paper.

Our main result in Section \ref{compact} states that a \v{C}ech-complete (in particular, compact) $X \in \dcal$ must be scattered.
  A very natural question arises what are those scattered compact spaces $X \in \dcal$.
In view of Theorem \ref{Theor:description}, it is known that a Corson compact $X$ belongs to the class $\dcal$ if and only if $X$ is a scattered Eberlein compact space \cite{FKLS}.
With the help of Theorems \ref{Theor:description} and \ref{Theor:union}
 we show that the class $\Delta$ contains also all separable compact spaces of the Isbell--Mr\'owka type.
Nevertheless, as we demonstrate in  Section \ref{compact}, there are compact scattered spaces $X \notin \Delta$  (for example, the compact space $[0,\omega_{1}]$).

Section \ref{metr} deals with the questions about metrizable spaces $X\in \Delta$.
We notice that every $\sigma$-scattered metrizable space $X$ belongs to the class $\Delta$.
For separable metrizable  spaces $X$, our analysis reveals a tight connection between distinguished $C_p(X)$ and well-known set-theoretic problems about special subsets of the real line $\R$.
We observe that the existence of an uncountable separable metrizable space $X\in\Delta$ is independent of ZFC and it is equivalent to the existence of
a separable countably paracompact nonnormal Moore space. 
We refer readers to \cite{Nyikos} for the history of the normal Moore problem.

In Section \ref{problems} we study whether the class $\Delta$ is invariant under the basic topological operations:
subspaces, (quotient) continuous images, finite/countable unions and finite products. We pose several new open problems.

%%%%%%%%%%%%%%%%%%%%%%
\section{Description Theorem}\label{description}

In this section we provide an intrinsic description of $X \in \dcal$. For the reader's convenience we recall some relevant terminology.
\begin{enumerate}
\item[\rm (a)] A disjoint cover $\{X_{\gamma}: \gamma \in \Gamma\}$ of $X$ is called a {\it partition} of $X$.
\item[\rm (b)] A collection of sets $\{U_{\gamma}: \gamma \in \Gamma\}$ is called an {\it expansion} of 
a collection of sets $\{X_{\gamma}: \gamma \in \Gamma\}$ in $X$
if $X_{\gamma} \subseteq U_{\gamma} \subseteq X$ for every index $\gamma \in \Gamma$.
\item[\rm (c)] A collection of sets $\{U_{\gamma}: \gamma \in \Gamma\}$ is called {\it point-finite} if no point belongs to infinitely many $U_{\gamma}$ -s.
\end{enumerate}

\begin{theorem}\label{Theor:description}
For a Tychonoff space $X$, the following conditions are equivalent{\rm:}
\begin{enumerate}
\item[{\rm (1)}] $C_p(X)$ is distinguished.
\item[{\rm (2)}] Any countable partition of $X$ admits a point-finite open expansion in $X$.
\item[{\rm (3)}] Any countable disjoint collection of subsets of $X$ admits a point-finite open expansion in $X$.
\item[{\rm (4)}] $X$ is a $\Delta$-space.
\end{enumerate}
\end{theorem}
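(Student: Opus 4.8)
The plan is to prove the cycle $(1)\Rightarrow(3)\Rightarrow(2)\Rightarrow(1)$, which settles the equivalence of the first three conditions, and then to close with the purely combinatorial equivalence $(3)\Leftrightarrow(4)$. Throughout I would invoke Theorem \ref{Theor:characterization} in the form that (1) is equivalent to: for every $f\in\R^X$ there is a bounded $B\subseteq C_p(X)$ with $f\in cl_{\R^X}(B)$, together with the standard fact that a set $B\subseteq C_p(X)$ is bounded exactly when it is pointwise bounded, i.e. $\varphi(x):=\sup_{g\in B}|g(x)|<\infty$ for every $x\in X$.

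For $(1)\Rightarrow(3)$, let $\{A_n:n\in\omega\}$ be a countable disjoint collection and define $f\in\R^X$ by $f\equiv n$ on $A_n$ and $f\equiv 0$ off $\bigcup_n A_n$. Choose a bounded $B$ with $f\in cl_{\R^X}(B)$ and let $\varphi$ be its pointwise bound. I would then set $U_n=\bigcup_{g\in B}\{x\in X:g(x)>n-\tfrac12\}$, which is open. For $x\in A_n$, approximating $f$ at the single coordinate $x$ within $\tfrac12$ produces some $g\in B$ with $g(x)>n-\tfrac12$, so $A_n\subseteq U_n$. The decisive point is point-finiteness: if $x\in U_n$ then some $g\in B$ has $g(x)>n-\tfrac12$, whence $n-\tfrac12<\varphi(x)$; since $\varphi(x)$ is finite, $x$ lies in only finitely many $U_n$. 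This is precisely where the definition of boundedness is converted into point-finiteness, and I regard it as the crux of the theorem.

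The implication $(3)\Rightarrow(2)$ is immediate, since a partition is a disjoint collection. For $(2)\Rightarrow(1)$ I reverse the construction. Given $f\in\R^X$, the \emph{magnitude partition} $A_n=\{x:n\le|f(x)|<n+1\}$ is a countable partition of $X$; let $\{U_n\}$ be a point-finite open expansion and define $\varphi(x)=\max(\{n+1:x\in U_n\}\cup\{1\})$, which is finite by point-finiteness. Then $B=\{g\in C(X):|g|\le\varphi\}$ is bounded, and it remains to check $f\in cl_{\R^X}(B)$. Given a finite $F\subseteq X$, for each $x\in F$ with $x\in A_{n(x)}$ I would use the Tychonoff property to choose pairwise disjoint open neighbourhoods and continuous bumps $h_x:X\to[0,1]$ with $h_x(x)=1$ vanishing outside $U_{n(x)}$; then $g=\sum_{x\in F}f(x)h_x$ is continuous, agrees with $f$ on $F$, and satisfies $|g|\le\varphi$ because on the support of each $h_x$ one has $|g|\le|f(x)|<n(x)+1\le\varphi$. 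Hence $f\in cl_{\R^X}(B)$ and $C_p(X)$ is distinguished.

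Finally I would treat $(3)\Leftrightarrow(4)$ combinatorially. For $(4)\Rightarrow(3)$, from a disjoint collection $\{A_n\}$ form the decreasing sequence $D_n=\bigcup_{k\ge n}A_k$, whose intersection is empty because the $A_k$ are disjoint; a $\Delta$-expansion $\{V_n\}$ is then a decreasing open family with $\bigcap_n V_n=\emptyset$, which is automatically point-finite (each point leaves some $V_n$, hence all later ones) and satisfies $A_n\subseteq D_n\subseteq V_n$. For $(3)\Rightarrow(4)$, given decreasing $\{D_n\}$ with $\bigcap_n D_n=\emptyset$, set $A_n=D_n\setminus D_{n+1}$, a disjoint collection with $D_n\subseteq\bigcup_{k\ge n}A_k$; take a point-finite open expansion $\{U_n\}$ and put $V_n=\bigcup_{k\ge n}U_k$. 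These are open, decreasing, contain $D_n$, and have empty intersection exactly because point-finiteness of $\{U_k\}$ drives every point out of $V_n$ for large $n$. The only genuinely analytic input is the pair of constructions in $(1)\Rightarrow(3)$ and $(2)\Rightarrow(1)$; I expect the main obstacle to be making these two translations exact mirror images, in particular ensuring in $(2)\Rightarrow(1)$ that the globally defined bound $\varphi$ is respected by the locally assembled approximants, which is exactly what disjointness of the bump supports secures.
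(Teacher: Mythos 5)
Your proposal is correct and takes essentially the same route as the paper: the decisive analytic steps coincide (converting pointwise boundedness of $B$ into point-finiteness of the superlevel-set expansion for $(1)\Rightarrow(3)$, and in the converse direction building $B=\{g:|g|\le\varphi\}$ from the envelope $\varphi(x)=\max\{n+1:x\in U_n\}$ of a point-finite expansion of the magnitude partition, with disjointly supported bump functions giving $f\in cl_{\R^X}(B)$), as do the combinatorial translations via $A_n=D_n\setminus D_{n+1}$ and tails/complements. The differences are purely organizational: you run the cycle $(1)\Rightarrow(3)\Rightarrow(2)\Rightarrow(1)$ and prove $(4)\Rightarrow(3)$ directly with $D_n=\bigcup_{k\ge n}A_k$, where the paper proves $(1)\Rightarrow(2)$, $(2)\Rightarrow(1)$, $(2)\Leftrightarrow(3)$ and $(4)\Rightarrow(2)$ via the sets $D_n=X\setminus\bigcup_{i<n}X_i$.
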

\begin{proof} Observe that every collection of pairwise disjoint subsets of $X$, $\{X_{\gamma}: \gamma \in \Gamma\}$ can be extended to a partition
by adding a single set $X_{\ast} = X \setminus \bigcup \{X_{\gamma}: \gamma \in \Gamma\}$. If the obtained partition admits a point-finite open expansion in $X$,
then removing one open set we get a point-finite open expansion of the original disjoint collection. This shows evidently the equivalence (2) $\Leftrightarrow$ (3).

Assume now that (3) holds. Let $\{D_n: n \in \omega\}$ be a decreasing sequence subsets of $X$ with empty intersection.
 Define $X_n = D_n \setminus D_{n+1}$ for each $n \in \omega$. By assumption, a disjoint collection $\{X_n: n \in \omega\}$
 admits a point-finite open expansion $\{U_n: n \in \omega\}$ in $X$. Then $\{V_n = \bigcup \{U_i: i \geq n\}: n \in \omega\}$
is an open decreasing expansion in $X$  with empty intersection. This proves the implication (3) $\Rightarrow$ (4).

Next we show (4) $\Rightarrow$ (2).
Let $\{X_n: n \in \omega\}$ be any countable partition of $X$. Define $D_0 = X$ and $D_n = X \setminus \bigcup\{X_i: i < n\}$.
Then $X_n \subset D_n$ for every $n$, the sequence $\{D_n: n \in \omega\}$ is decreasing and its intersection is empty.
 Assuming (4), we find an open decreasing expansion  $\{U_n: n \in \omega\}$ of $\{D_n: n \in \omega\}$ in $X$ such that $\bigcap\{U_n: n \in \omega\} = \emptyset$.
For every $x \in X$ there is $n$ such that $x \notin U_m$ for each $m > n$, it means that $\{U_n: n \in \omega\}$ is a point-finite expansion of
$\{X_n: n \in \omega\}$ in $X$. This finishes the proof (3) $\Rightarrow$ (4) $\Rightarrow$ (2) $\Leftrightarrow$ (3).

Now we prove the implication (1) $\Rightarrow$ (2). Let $\{X_n: n \in \omega\}$ be any countable partition of $X$.
Fix any function $f \in \R^X$ which satisfies the following conditions: for each $n \in \omega$ and every $x \in X_n$ the value of $f(x)$ is greater than $n$.
By assumption, there is a bounded subset $B$ of $C_p(X)$ such that $f \in cl_{\R^X}(B)$.
Hence, for every $n \in \omega$ and every point $x \in X_n$,  there exists $f_x \in B$ such that
$f_x(x) > n$. But $f_x$ is a continuous function, therefore there is an open neighbourhood $U_x \subset X$ of $x$ such that $f_x(y) > n$ for every $y \in U_x$.
We define an open set $U_n \subset X$ as follows: $U_n = \bigcup\{U_x: x \in X_n\}$. Evidently, $X_n \subseteq U_n$ for each $n \in \omega$.
If we assume that the open expansion $\{U_n : n \in \omega\}$ is not point-finite, then there exists a point $y \in X$ such that
 there are infinitely many numbers $n$ with $y \in U_{x_n}$ for some $x_n \in X_n$. This means that $\sup\{g(y): g \in B \} = \infty$,
which contradicts the boundedness of $B$.

It remains to prove (2) $\Rightarrow$ (1).
By Theorem \ref{Theor:characterization}, we need to show that for every mapping $f \in \R^X$
 there is a bounded set $B \subset C_p(X)$ such that $f \in cl_{\R^X}(B)$.
If  there exists a constant $r > 0$ such that $\sup\{|f(x)|: x \in X\} < r$,
 then we take $B=\{h \in C(X): \sup\{|h(x)|: x \in X\} < r \}$. It is easy to see that $B$ is as required.

Let $f \in \R^X$ be unbounded. Denote by $Y_0=\emptyset$ and $Y_n =\{x \in X: n-1 \leq |f(x)| < n\}$ for each non-zero $n \in \omega$.
Define $\varphi: X \rightarrow \omega$ by the rule: if $Y_n \neq \emptyset$ then $\varphi(x) = n$ for every $x\in Y_n$.
So, $|f| < \varphi$. Put $X_{n}=\varphi^{-1}(n)$ for each $n \in \omega$. Note that some sets $X_{n}$ might happen to be empty,
but the collection $\{X_{n}: n \in \omega\}$ is a partition of $X$ with countably many nonempty $X_n$ -s.
By our assumption, there exists a point-finite open expansion $\{U_{n}:n\in\omega\}$ of the partition $\{X_{n}:n\in\omega\}$. 
 Define $F:X\rightarrow\omega$ by $F(x)=\max\{n:x\in U_{n}\}$. Obviously, $f < F$. Finally, we define $B = \{h\in C_{p}(X): |h|\leq F\}$.
 Then $f \in cl_{\R^X}(B)$, because for every finite subset $K \subset X$ there is a function $h \in B$
such that $f\restriction_{K} = h\restriction_{K}$.
 Indeed, given a finite subset $K \subset X$, let $\{V_{x}:x\in K\}$ be the family of pairwise disjoint open sets
 such that $x\in V_{x}\subset U_{\varphi(x)}$ for every $x\in K$.  For each $x\in K$, fix a continuous function
 $h_{x}:X\rightarrow [-\varphi(x),\varphi(x)]$ such that $h_{x}(x)= f(x)$ and $h_{x}$ is equal to the constant value $0$ on the closed set $X \setminus V_{x}$.
 One can verify that $h=\Sigma_{x\in K}h_{x} \in B$ is as required.
\end{proof}

Below we present a straightforward application of Theorem \ref{Theor:description}.

\begin{corollary}{\rm (\cite{FKLS})}\label{cor:subspace}
Let $Z$ be any subspace of $X$. If $X$ belongs to the class $\Delta$, then $Z$ also belongs to the class $\Delta$.
\end{corollary}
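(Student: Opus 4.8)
The plan is to prove this directly from the $\Delta$-space characterization rather than through $C_p$. By Theorem \ref{Theor:description}, the hypothesis $X \in \Delta$ is equivalent to $C_p(X)$ being distinguished, and the conclusion $Z \in \Delta$ is equivalent to $C_p(Z)$ being distinguished; so it suffices to verify that the defining property in Definition \ref{def:Delta} passes from $X$ to an arbitrary subspace $Z$. This is the most economical route, since the $\Delta$-space condition is purely set-theoretic and topological and interacts cleanly with the subspace topology.

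First I would take an arbitrary decreasing sequence $\{D_n : n \in \omega\}$ of subsets of $Z$ with $\bigcap_{n} D_n = \emptyset$. Since $Z \subseteq X$, each $D_n$ is also a subset of $X$, and the sequence remains decreasing with empty intersection when regarded inside $X$. Applying the hypothesis $X \in \Delta$ to this sequence yields a decreasing sequence $\{V_n : n \in \omega\}$ of open subsets of $X$ with $\bigcap_n V_n = \emptyset$ and $D_n \subseteq V_n$ for every $n$.

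Next I would set $W_n = V_n \cap Z$ for each $n \in \omega$. By definition of the subspace topology each $W_n$ is open in $Z$; the sequence $\{W_n\}$ is decreasing because $\{V_n\}$ is; and $D_n \subseteq W_n$ since $D_n \subseteq V_n$ and $D_n \subseteq Z$. It remains only to check that the intersection is empty, which follows from
\[
\bigcap_{n \in \omega} W_n = \bigcap_{n \in \omega} (V_n \cap Z) = Z \cap \bigcap_{n \in \omega} V_n = Z \cap \emptyset = \emptyset.
\]
Thus $\{W_n : n \in \omega\}$ is the required decreasing open expansion in $Z$, witnessing that $Z$ is a $\Delta$-space.

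There is no genuine obstacle here: the argument is essentially immediate once one observes that the $\Delta$-space condition quantifies over \emph{all} decreasing sequences of subsets with empty intersection, so restricting attention to those lying inside $Z$ is harmless, and that intersecting the resulting open expansion with $Z$ preserves openness, monotonicity, containment, and emptiness of the intersection simultaneously. The only point deserving explicit mention is the last displayed identity, which guarantees that passing to the subspace does not reintroduce a common point into the expansion.
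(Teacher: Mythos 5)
Your proof is correct, but it takes a somewhat different route from the paper's. You argue straight from Definition \ref{def:Delta}: given a decreasing sequence of subsets of $Z$ with empty intersection, you view it inside $X$, apply the hypothesis there, and intersect the resulting decreasing open sequence with $Z$; openness in the subspace topology, monotonicity, containment, and emptiness of the intersection all survive, so the argument is self-contained and needs no machinery at all --- in particular your opening appeal to Theorem \ref{Theor:description} is superfluous, since the corollary is stated purely in terms of the class $\Delta$ and you never actually use the $C_p$ equivalence. The paper instead routes through condition (3) of Theorem \ref{Theor:description}: it takes an arbitrary (not merely countable) disjoint family $\{Z_{\gamma}: \gamma \in \Gamma\}$ of subsets of $Z$, expands it point-finitely by open sets in $X$, intersects with $Z$, observes that point-finiteness is inherited by the traces $\{U_{\gamma} \cap Z\}$, and then invokes the theorem to conclude $Z \in \Delta$. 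The underlying move --- restrict the open expansion to the subspace --- is identical in both; what the paper's formulation buys is alignment with the point-finite-expansion toolkit it reuses throughout (e.g.\ in Proposition \ref{prop:subspace} and Theorem \ref{Theor:union}), while yours buys complete independence from Theorem \ref{Theor:description} and works verbatim from the original definition.
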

\begin{proof}
If $\{Z_{\gamma}: \gamma \in \Gamma\}$ is any collection of pairwise disjoint subsets of $Z$
and there exists a point-finite open expansion $\{U_{\gamma}: \gamma \in \Gamma\}$ in $X$,
then obviously $\{U_{\gamma} \cap Z: \gamma \in \Gamma\}$ is a point-finite expansion consisting of the sets relatively open in $Z$.
It remains to apply Theorem \ref{Theor:description}.
\end{proof}

The last result can be reversed, assuming that $X \setminus Z$ is finite.
\begin{proposition}\label{prop:subspace}
Let $Z$ be a subspace of $X$  such that $Y = X \setminus Z$ is finite.
If $Z$ belongs to the class $\Delta$, then  $X$ belongs to $\Delta$ as well.
\end{proposition}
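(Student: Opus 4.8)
The plan is to verify condition~(2) of Theorem~\ref{Theor:description} for $X$: every countable partition of $X$ admits a point-finite open expansion in $X$. So let $\{X_n : n \in \omega\}$ be an arbitrary countable partition of $X$. Intersecting with $Z$ yields a disjoint collection $\{X_n \cap Z : n \in \omega\}$ of subsets of $Z$. Since $Z \in \Delta$, Theorem~\ref{Theor:description} applied to the subspace $Z$ supplies a point-finite open expansion $\{W_n : n \in \omega\}$ of this collection in $Z$, so each $W_n$ is open in $Z$ and $X_n \cap Z \subseteq W_n$. I then choose open sets $G_n \subseteq X$ with $G_n \cap Z = W_n$.

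The only thing that can go wrong is the behaviour at the finite remainder $Y = X \setminus Z$: nothing forces the family $\{G_n\}$ to be point-finite at points of $Y$, so the $G_n$ cannot be used unmodified. This is exactly where finiteness of $Y$ enters. Since $X$ is Tychonoff, hence $T_1$, the finite set $Y$ is closed and each $Y \setminus \{y\}$ is closed; thus for every $y \in Y$ the set $O_y = X \setminus (Y \setminus \{y\})$ is open and satisfies $O_y \cap Y = \{y\}$. Now define
$$U_n = (G_n \setminus Y) \cup \bigcup\{O_y : y \in X_n \cap Y\}.$$
Each $U_n$ is open, because $G_n \setminus Y$ is open ($Y$ being closed) and the remaining term is a finite union of open sets.

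It remains to check that $\{U_n : n \in \omega\}$ is a point-finite open expansion of $\{X_n\}$. For containment, $X_n \cap Z \subseteq W_n = G_n \cap Z \subseteq G_n \setminus Y$, while each point of $X_n \cap Y$ lies in its own $O_y \subseteq U_n$; hence $X_n \subseteq U_n$. For point-finiteness I distinguish two cases. If $x \in Z$, then $x \in G_n \setminus Y$ precisely when $x \in W_n$, which occurs for only finitely many $n$ by the point-finiteness of $\{W_n\}$ in $Z$; moreover $x$ lies in at most $|Y|$ of the sets $O_y$, and each $O_y$ was inserted into a single $U_n$, namely the one whose block contains $y$. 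If $x \in Y$, then $x \notin G_n \setminus Y$ for every $n$, and $x \in O_y$ only for $y = x$ by the choice of the $O_y$; since $x$ belongs to exactly one block $X_{n_0}$ of the partition, $x$ lies in $U_n$ only for $n = n_0$. Either way $x$ meets only finitely many $U_n$, and Theorem~\ref{Theor:description} yields $X \in \Delta$.

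The main obstacle, as indicated, is the transfer from $Z$ to $X$ across the finite set $Y$: one must repair the open sets so that point-finiteness is not destroyed at the adjoined points. The two ingredients that make this work are precisely that $Y$ is closed, so that points may be deleted from the $G_n$ while keeping them open, and that $Y$ is finite, so that only finitely many correction neighbourhoods $O_y$ are introduced, each of them contributing to a single member of the expansion.
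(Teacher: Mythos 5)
Your proof is correct and follows essentially the same route as the paper: both verify the criterion of Theorem \ref{Theor:description}, using that $Y$ is finite and closed (hence $Z$ is open in $X$, so open sets of $Z$ stay open in $X$, and only finitely many members of the partition can meet $Y$) to transfer a point-finite open expansion from $Z$ to $X$. The only difference is cosmetic: where you repair the expansion at $Y$ with the punctured neighbourhoods $O_y$, the paper simply declares $U_n = X$ for the finitely many indices $n$ with $X_n \cap Y \neq \emptyset$, which preserves point-finiteness for free.
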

\begin{proof}
Let $\{X_n: n \in \omega\}$ be any countable collection of pairwise disjoint subsets of $X$.
Denote by $F$ the set of those $n \in \omega$ such that $X_n \bigcap Y  \neq \emptyset$.
There might be only finitely many $X_n$ -s which intersect the finite set $Y$, hence  $F \subset \omega$ is finite.
If $n \in F$, then we simply declare that $U_n$ is equal to $X$.
Consider the subcollection $\{X_n: n \in \omega \setminus F\}$. It is a countable collection of pairwise disjoint subsets of $Z$.
Since $Z \in \dcal$, by Theorem \ref{Theor:description},
there is a point-finite open expansion $\{U_n: n \in \omega \setminus F\}$ in $Z$.
 Observe that $Z$ is open in $X$, therefore all those $U_n$ -s remain open in $X$.
Bringing all $U_n$ -s of both sorts together we obtain
a point-finite open expansion $\{U_n: n \in \omega \}$ in $X$. Finally, $X \in \dcal$, by Theorem \ref{Theor:description}.
\end{proof}

\begin{remark} \label{Rem:scant}
The following applicable concept  has been re-introduced  in \cite{FKLS}.
A family $\left\{\mathcal{N}_{x}:x\in X\right\}$ of
subsets of a Tychonoff space $X$ is called a \emph{scant cover} for $X$ if each $\mathcal{N}_{x}$ is an open neighbourhood of $x$ and for each $u\in X$
 the set $X_{u} =\left\{ x\in X:u\in \mathcal{N}_{x}\right\}$ is finite.
 \footnote{ The referee kindly informed the authors  that
 this notion also is known in the literature  under the name \emph{the point-finite neighbourhood assignment}.}

 Our Theorem \ref{Theor:description} generalizes one of the  results obtained in \cite{FKLS} stating that if $X$
admits a scant cover $\left\{ \mathcal{N}_{x}:x\in X\right\}$ then $C_{p}\left(X\right)$ is distinguished.
Indeed, let $\{X_{\gamma}: \gamma \in \Gamma\}$ be any collection of pairwise disjoint subsets of $X$.
Define $U_{\gamma} = \bigcup \{\mathcal{N}_{x}: x\in X_{\gamma}\}$.
It is easily seen that $\{U_{\gamma}: \gamma \in \Gamma\}$ is a point-finite open expansion in $X$, by definition of a scant cover.
Applying Theorem \ref{Theor:description}, we conclude that $C_{p}\left( X\right) $\ is distinguished.
\end{remark}

%%%%%%%%%%%%%%%%%%%%%%%%%%%%%%%%%%%
\section{Applications to compact spaces $X \in \dcal$}\label{compact}

First we  recall a few definitions and facts (probably well-known) which will be used in the sequel.
 A space $X$ is said to be {\it scattered} if every nonempty
subset $A$ of $X$ has an isolated point in $A$. 
Denote by $A^{(1)}$  the set of all non-isolated (in $A$) points of $A \subset X$. 
For ordinal numbers $\alpha$, the $\alpha$-th derivative of a topological space $X$
is defined by transfinite induction as follows. 

$X^{(0)} = X$;\,
$X^{(\alpha+1)} = (X^{(\alpha)})^{(1)}$;\,
$X^{(\gamma)} = \bigcap_{\alpha<\gamma} X^{(\alpha)}$ for limit ordinals $\gamma$.

For a scattered space $X$, the smallest ordinal $\alpha$ such that $X^{(\alpha)}=\emptyset$ is called the \emph{scattered height} of $X$ and is denoted by
$ht(X)$. For instance, $X$ is discrete if and only if $ht(X)=1$.

 The following classical  theorem is due to A. Pe\l czy\'nski and Z. Semadeni.

\begin{theorem}{\rm (\cite[Theorem~8.5.4]{Semadeni})}\label{Theor:scattered}
A compact space $X$ is scattered if and only if there is no continuous mapping of $X$ onto the segment $[0,1]$.
\end{theorem}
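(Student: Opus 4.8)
The plan is to establish the two implications of this classical dichotomy separately. The direction \emph{scattered} $\Rightarrow$ \emph{no continuous surjection onto} $[0,1]$ is the soft one and follows from a minimality argument, while the converse requires building an explicit map and will be the main obstacle.

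For the first implication I would argue by contradiction: suppose $X$ is compact scattered and $f\colon X\to[0,1]$ is a continuous surjection. Consider the family $\mathcal{F}$ of all nonempty closed subsets $F\subseteq X$ with $f(F)=[0,1]$, ordered by reverse inclusion. Given a chain in $\mathcal{F}$, its intersection still maps onto $[0,1]$: for each $t\in[0,1]$ the sets $F\cap f^{-1}(t)$ form a nested family of nonempty compact sets, so by compactness their intersection is nonempty. Hence Zorn's Lemma yields a minimal element $F_0\in\mathcal{F}$. Since $[0,1]$ has more than one point, $F_0$ is infinite; being a nonempty subset of the scattered space $X$, it has a point $p$ isolated in $F_0$, so that $\{p\}$ is open in $F_0$ and $F_0\setminus\{p\}$ is closed, hence compact. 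Its image is then compact, therefore closed, and contains the dense subset $[0,1]\setminus\{f(p)\}$; thus $f(F_0\setminus\{p\})=[0,1]$, contradicting the minimality of $F_0$. This rules out any continuous surjection of $X$ onto $[0,1]$.

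For the converse I would prove the contrapositive. If $X$ is not scattered, then the derivatives $X^{(\alpha)}$ form a decreasing transfinite chain of closed sets, which must stabilize at some nonempty set $P=X^{(\alpha_0)}=X^{(\alpha_0+1)}=\bigl(X^{(\alpha_0)}\bigr)^{(1)}$; by construction $P$ has no isolated points, so $X$ contains a nonempty \emph{perfect} compact subspace $P$. Since $P$ is closed in the compact, hence normal, space $X$, it suffices to produce a continuous surjection $g_0\colon P\to[0,1]$ and then extend it, by the Tietze theorem, to a continuous map $g\colon X\to[0,1]$; as $g(X)\supseteq g_0(P)=[0,1]$, the extension $g$ is the desired surjection.

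The heart of the matter, and the step I expect to be hardest, is the construction of $g_0$. Here I would build a dyadic Cantor scheme of nonempty closed perfect sets $\{F^{n}_{k}: n\in\omega,\ 0\le k<2^{n}\}$ inside $P$, with $F^{n}_{k}$ thought of as lying over the dyadic interval $[k2^{-n},(k+1)2^{-n}]$, subject to $F^{0}_{0}=P$, to the covering relation $F^{n}_{k}=F^{n+1}_{2k}\cup F^{n+1}_{2k+1}$, and to the separation condition $F^{n}_{i}\cap F^{n}_{j}=\emptyset$ whenever $|i-j|\ge 2$, so that only consecutive pieces may overlap, exactly as consecutive dyadic intervals share an endpoint. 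The inductive step rests on a splitting lemma: a nonempty perfect compact space is the union of two nonempty closed perfect subsets, obtained as closures of suitable disjoint open sets, using that the closure of a nonempty open subset of a perfect space is again perfect. The delicate bookkeeping is to perform each split so that the overlap $F^{n}_{k}\cap F^{n}_{k+1}$ is absorbed into the two adjacent children while nonconsecutive children stay disjoint; normality of $P$ makes this arrangement possible. Granting the scheme, every $p\in P$ lies at each level $n$ in one or two consecutive $F^{n}_{k}$, which singles out a nested sequence of dyadic intervals of length $2^{-n}$ shrinking to one real number $g_0(p)$; continuity is immediate from this length bound, and surjectivity follows because for each $t\in[0,1]$ the corresponding nested sequence of nonempty closed sets $F^{n}_{k(n)}$ has nonempty intersection, every point of which is mapped to $t$. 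The two implications together give the claimed equivalence.
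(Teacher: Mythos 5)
Your statement is quoted by the paper directly from Semadeni (Theorem~8.5.4) and is not proved there, so there is no in-paper argument to compare with; judged on its own merits, your proof is essentially correct and follows the classical lines. The forward direction is sound: your Zorn's Lemma minimization is precisely the existence of an irreducible restriction (the paper's Proposition~\ref{Prop:restriction}, which it uses for Theorem~\ref{Theor:Main_result}), and the isolated-point contradiction is the standard way irreducibility clashes with scatteredness; the compactness details (nested fibers along a chain, closedness of the image of $F_0\setminus\{p\}$) all check out.

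For the converse, the Cantor--Bendixson reduction to a nonempty perfect compact $P$ and the Tietze extension are fine, and your overlapping dyadic scheme does work: the admissible set of values for $p$ at level $n$ is a union of at most two adjacent dyadic intervals of total length $2^{1-n}$, these sets are nested, continuity follows by removing from $P$ the finitely many level-$n$ pieces not containing $p$, and surjectivity follows from compactness along a consistently chosen branch of indices. The one point you leave under-verified is that the induction does not degenerate: you must maintain the invariant that the two overlap sets sitting inside a given piece are disjoint (this follows from disjointness of nonconsecutive pieces) and that their union is a proper --- in fact nowhere dense --- closed subset of that piece, so that normality leaves room for the next split and pieces never collapse to points (a singleton piece would force nonconsecutive descendants to meet). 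This invariant does hold with your choice of children as closures of open sets, since sibling overlaps lie in boundaries of open sets and inherited overlaps intersected with a closure-of-an-open-set child stay nowhere dense, but it needs to be stated and checked. A cheaper route for this half, worth knowing: build a scheme of pairwise \emph{disjoint} nonempty closed sets $F_s$, $s\in 2^{<\omega}$, with $F_{s0}\cup F_{s1}\subseteq F_s$ (possible in any nonempty perfect compact Hausdorff space by regularity alone, with no diameter control needed), obtain a continuous surjection of the compact set $\bigcap_{n}\bigcup_{|s|=n}F_s$ onto $2^{\omega}$, and compose with the binary-expansion surjection $2^{\omega}\to[0,1]$ before applying Tietze; this eliminates the overlap bookkeeping entirely.
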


A continuous surjection $\pi:X \rightarrow Y$ is called {\it irreducible} (see \cite[Definition~7.1.11]{Semadeni})
if for every closed subset $F$ of $X$ the condition $\pi(F)=Y$ implies $F=X$.

\begin{proposition}{\rm (\cite[Proposition~7.1.13]{Semadeni})}\label{Prop:restriction}
Let $X$ be a compact space and let $\pi:X \rightarrow Y$ be a continuous surjection.
Then there exists a closed subset $F$ of $X$ such that $\pi(F)=Y$ and the restriction ${\pi\res_{F}:F \rightarrow Y}$ is irreducible.
\end{proposition}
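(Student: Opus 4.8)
The plan is to obtain the desired $F$ as a minimal element, with respect to inclusion, of the family of closed subsets of $X$ that still map onto $Y$, and then to read off irreducibility directly from minimality. Concretely, let $\mathcal{F}$ denote the collection of all closed sets $F \subseteq X$ with $\pi(F) = Y$. This family is nonempty, since $X \in \mathcal{F}$, and I would partially order it by inclusion, aiming to apply Zorn's lemma to produce a minimal element.

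The heart of the argument is to verify the hypothesis of Zorn's lemma, namely that every chain $\{F_i : i \in I\} \subseteq \mathcal{F}$ (totally ordered by inclusion) admits a lower bound in $\mathcal{F}$. The natural candidate is $F = \bigcap_{i \in I} F_i$, which is automatically closed; the only thing to check is that $\pi(F) = Y$. Fix $y \in Y$ and consider the sets $F_i \cap \pi^{-1}(y)$. Each is closed and, because $\pi(F_i) = Y$, nonempty. Since the $F_i$ form a chain, any finite subfamily of $\{F_i \cap \pi^{-1}(y) : i \in I\}$ has a smallest member and hence a nonempty intersection, so this family has the finite intersection property. This is exactly the step where compactness of $X$ enters: by compactness the full intersection $\bigcap_{i \in I} (F_i \cap \pi^{-1}(y)) = F \cap \pi^{-1}(y)$ is nonempty, giving a point of $F$ lying over $y$. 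As $y$ was arbitrary, $\pi(F) = Y$, so $F \in \mathcal{F}$ is the required lower bound.

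Zorn's lemma then yields a minimal element $F_0 \in \mathcal{F}$. It remains to check that $\pi\res_{F_0} : F_0 \to Y$ is irreducible, and here minimality does all the work: if $F'$ is a closed subset of $F_0$ with $\pi(F') = Y$, then $F' \in \mathcal{F}$ and $F' \subseteq F_0$, so minimality forces $F' = F_0$. By the definition of irreducibility (applied to the surjection $\pi\res_{F_0}$ of the compact space $F_0$ onto $Y$) this is precisely the assertion that $\pi\res_{F_0}$ is irreducible, which completes the proof.

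I would expect the finite-intersection-property verification for chains to be the only genuinely delicate point, since it is where compactness of $X$ is essential; everything else is a routine application of Zorn's lemma and an unwinding of the definition of an irreducible map.
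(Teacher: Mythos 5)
Your proof is correct and is the standard argument: the paper itself gives no proof of this proposition (it is quoted from Semadeni, Proposition~7.1.13), and your Zorn's lemma route --- taking a minimal element of the family of closed sets mapping onto $Y$, verifying the chain condition via the finite intersection property of the sets $F_i \cap \pi^{-1}(y)$ in the compact space $X$, and reading irreducibility off minimality --- is precisely the classical proof. The one step you use implicitly is that $\pi^{-1}(y)$ is closed, which needs $Y$ to be $T_1$; this is harmless here since the paper assumes throughout that all spaces are Tychonoff.
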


\begin{proposition}{\rm (\cite[Proposition~25.2.1]{Semadeni})}\label{Prop:dense}
Let $X$ be a compact space and let $\pi:X \rightarrow Y$ be a continuous surjection.
Then  $\pi$ is irreducible if and only if whenever  $E \subset X$ and $\pi(E)$ is dense in $Y$, then $E$ is dense in $X$.
\end{proposition}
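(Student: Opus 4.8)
The plan is to prove the two implications separately, straight from the definition of irreducibility, the only external input being that $Y$ is compact Hausdorff: since $X$ is compact and $\pi$ is a continuous surjection, $Y=\pi(X)$ is compact, and $Y$ is Hausdorff because all spaces here are Tychonoff.

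First I would dispatch the easy direction, assuming the density condition and deducing irreducibility. Let $F$ be a closed subset of $X$ with $\pi(F)=Y$. Then $\pi(F)$ is trivially dense in $Y$, so the hypothesis gives that $F$ is dense in $X$; being closed, $F=\overline{F}=X$, which is exactly irreducibility.

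For the converse, suppose $\pi$ is irreducible and let $E\subset X$ be such that $\pi(E)$ is dense in $Y$. I would set $F=\overline{E}$, the closure of $E$ in $X$, and aim to show $F=X$. The crucial step is to verify that $\pi(F)=Y$. Here $F$ is a closed subset of the compact space $X$, hence compact, so its continuous image $\pi(F)$ is compact and therefore closed in the Hausdorff space $Y$. Since $\pi(E)\subset\pi(F)$ with $\pi(E)$ dense, this forces $Y=\overline{\pi(E)}\subset\pi(F)\subset Y$, that is $\pi(F)=Y$. Now irreducibility applied to the closed set $F$ yields $F=X$, i.e. $E$ is dense in $X$, as required.

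The main (and essentially only) obstacle is the closedness of $\pi(F)$: this is precisely where compactness of $X$ and the Hausdorff property of $Y$ are used, and without them a continuous image of a closed set need not be closed, so the argument would break down. Everything else reduces to a formal manipulation of the definitions of density and irreducibility.
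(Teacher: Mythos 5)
Your proof is correct, and it is the standard argument: the paper itself does not prove this proposition but cites it directly from Semadeni's book (Proposition~25.2.1 there), where essentially the same reasoning appears. Both directions are handled properly --- in particular you correctly identify that the only nontrivial point is the closedness of $\pi(\overline{E})$, which follows from compactness of $X$ and the Hausdorff property of $Y$ (guaranteed here by the paper's standing Tychonoff assumption).
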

 Recall that a Tychonoff space $X$ is \emph{\v{C}ech-complete} if $X$ is a $G_{\delta}$-set in some
(equivalently, any) compactification of $X$, (see \cite[3.9.1]{Engelking}).
It is well known that every locally compact space and every completely metrizable space is \v{C}ech-complete.
Next statement resolves an open question posed in \cite{FKLS}.

\begin{theorem}\label{Theor:Main_result}
Every \v{C}ech-complete (in particular, compact) $\dcal$-space is scattered.
\end{theorem}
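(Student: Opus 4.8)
The plan is to establish the compact case first, using precisely the three auxiliary results just recalled, and then to reduce the \v{C}ech-complete case to it by a subspace argument. So suppose first that $X$ is compact, $X \in \dcal$, but \emph{not} scattered. By Theorem \ref{Theor:scattered} there is a continuous surjection $X \to [0,1]$, and by Proposition \ref{Prop:restriction} we may pass to a closed subset $F \subseteq X$ carrying an \emph{irreducible} surjection $\pi \colon F \to [0,1]$. Being closed in $X$, the space $F$ is itself in $\dcal$ by Corollary \ref{cor:subspace}. I would then fix a countable dense set $\{d_n : n \in \omega\} \subseteq [0,1]$ with the $d_n$ distinct, choose points $x_n \in \pi^{-1}(d_n)$, and apply condition (3) of Theorem \ref{Theor:description} to the disjoint family $\{\{x_n\} : n \in \omega\}$ to obtain a point-finite open expansion $\{U_n : n \in \omega\}$ in $F$ with $x_n \in U_n$.

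The contradiction then comes from a collision between irreducibility and the Baire property. Setting $V_n = \bigcup_{k \geq n} U_k$, each $V_n$ is open and $\pi(V_n) \supseteq \{d_k : k \geq n\}$, which is still dense in $[0,1]$ (only finitely many points were dropped from a dense set). Hence, by irreducibility and Proposition \ref{Prop:dense}, each $V_n$ is dense in $F$. Since $F$ is compact and therefore Baire, $\bigcap_n V_n \neq \emptyset$. On the other hand point-finiteness of $\{U_n\}$ forces $\bigcap_n V_n = \emptyset$, since any $y \in F$ lies in only finitely many $U_k$ and hence lies outside $V_n$ for all large $n$. This contradiction proves that a compact $\dcal$-space is scattered. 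The key mechanism to emphasize is that irreducibility is what transports the density of the tails $\{d_k : k \geq n\}$ from $[0,1]$ back up to $F$; without Proposition \ref{Prop:dense} the sets $V_n$ need not be dense in $F$ and the Baire argument collapses.

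For the general \v{C}ech-complete case my plan is to manufacture a compact non-scattered subspace and invoke the case already proved. If $X$ is \v{C}ech-complete and not scattered, its perfect kernel $P$, namely the stabilized Cantor--Bendixson derivative $X^{(\alpha)}$ with $X^{(\alpha)} = X^{(\alpha+1)}$, is a nonempty closed subspace with no isolated points; as a closed subspace it is again \v{C}ech-complete, and $P \in \dcal$ by Corollary \ref{cor:subspace}. Using a complete sequence of open covers witnessing \v{C}ech-completeness of $P$ together with the absence of isolated points, I would run a Cantor scheme: nonempty open sets $\{W_s : s \in 2^{<\omega}\}$ with $\overline{W_{s0}}$ and $\overline{W_{s1}}$ disjoint subsets of $W_s$ and with $W_s$ refining the $n$-th cover at level $n = |s|$. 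Completeness makes each branch intersection nonempty, and the induced map $2^\omega \to P$ is a continuous injection of a compact space into a Hausdorff space, hence an embedding onto a copy $K \cong 2^\omega$ of the Cantor set. Then $K$ is a compact non-scattered subspace of $X$, so by the compact case $K \notin \dcal$, while $K \in \dcal$ by Corollary \ref{cor:subspace} --- the desired contradiction.

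I expect the \v{C}ech-complete reduction to be the main obstacle: the compact case is essentially routine once the three recalled results are in hand, whereas extracting a compact perfect subset genuinely requires the Cantor-scheme construction, where \v{C}ech-completeness is used to keep the branch intersections nonempty and compactness is used to upgrade the resulting continuous bijection to a homeomorphism. (This is also exactly the place where \v{C}ech-completeness cannot be dropped: $\mathbb{Q}$ is crowded but contains no nonempty compact perfect subset.) The one subsidiary point worth checking carefully is that the scheme can be arranged so that branch intersections are singletons, so that $K$ is genuinely homeomorphic to $2^\omega$ rather than merely a continuous image of it.
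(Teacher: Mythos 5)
Your compact case is correct and is essentially the paper's own argument: the paper too runs Theorem~\ref{Theor:scattered} $\to$ Proposition~\ref{Prop:restriction} $\to$ Corollary~\ref{cor:subspace} and then derives the contradiction from irreducibility (via Proposition~\ref{Prop:dense}) combined with the Baire property of the compact space. The only cosmetic difference is the witness: the paper partitions $[0,1]$ into countably many dense sets, pulls back the tails to get a decreasing sequence of dense sets $Z_n$ with empty intersection, and applies the $\Delta$-property directly, whereas you expand singletons $\{x_n\}$ lying over a dense sequence $\{d_n\}$ and form the tails $V_n=\bigcup_{k\geq n}U_k$. These devices are interchangeable, and you correctly identify Proposition~\ref{Prop:dense} as the engine that transports density upward.

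The \v{C}ech-complete step is where you genuinely diverge, and where there is a real error. The paper disposes of this step by quoting the folklore fact (proved in detail in the cited paper of Shakhmatov et al.) that a \v{C}ech-complete space is scattered if and only if all its compact subsets are; you instead re-prove it with a Cantor scheme, which is the right mechanism, but your concluding claim --- that the scheme can be arranged so that branch intersections are singletons, yielding an embedding of $2^\omega$ --- is not merely ``a subsidiary point worth checking'': it is false in general. A crowded compact space need not contain any copy of the Cantor set: $\beta\N\setminus\N$ is compact and has no isolated points, yet it has no nontrivial convergent sequences and hence no infinite compact metrizable subspaces. (Similarly, in $[0,1]^{\omega_1}$ no point is a $G_\delta$, while each branch intersection $\bigcap_n \overline{W_{\sigma|n}}=\bigcap_n W_{\sigma|n}$ is a nonempty closed $G_\delta$, so singleton branches are unattainable there.) Moreover, with non-singleton branches there is no natural \emph{continuous} map $2^\omega\to P$; the continuous map goes the other way. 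The repair is immediate and costs nothing: set $K=\bigcap_n\bigcup_{|s|=n}\overline{W_s}$. Completeness of the sequence of covers makes $K$ compact, the disjointness of the closures at each level makes the branch map $K\to 2^\omega$ well defined and continuous, and completeness again makes it surjective. Composing with a continuous surjection $2^\omega\to[0,1]$ and invoking Theorem~\ref{Theor:scattered} shows $K$ is a non-scattered compact subspace of $X$, and your intended contradiction with the compact case via Corollary~\ref{cor:subspace} then goes through verbatim. With that one correction your proof is complete and, unlike the paper's, self-contained at this step.
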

\begin{proof}
\emph{Step 1: $X$ is compact}. On the contrary, assume that $X$ is not scattered.
First, by Theorem \ref{Theor:scattered}, there is a continuous mapping $\pi$ from $X$ onto the segment $[0,1]$.
Second, by Proposition \ref{Prop:restriction}, there exists a closed subset $F$ of $X$ such that $\pi(F)=[0,1]$ and the restriction ${\pi\res_{F}:F \rightarrow [0,1]}$ is irreducible.
Since $X \in \dcal$ the compact space $F$ also belongs to $\dcal$, by Corollary \ref{cor:subspace}. For simplicity, without loss of generality we may assume that
$F$ is $X$ itself and $\pi: X \rightarrow [0,1]$ is irreducible.

Let $\{X_{n}:n\in\omega\}$ be a partition of $[0,1]$ into dense sets. Put $Y_{n}=\bigcup_{k\geq n}X_{k}$, and $Z_{n}=\pi^{-1}(Y_{n})$ for all $n\in\omega$. Then all sets $Z_{n}$ are dense in $X$ by Proposition  \ref{Prop:dense} and the intersection $\bigcap_{n\in\omega} Z_{n}$ is empty. Every compact space $X$ is a Baire space, i.e. 
the Baire category theorem holds in $X$, hence if $\{U_{n}:n\in\omega\}$ is any open expansion of $\{Z_{n}:n\in\omega\}$, then the intersection $\bigcap_{n\in\omega}U_{n}$ is dense in $X$.
 In view of our Theorem \ref{Theor:description} this conclusion contradicts the assumption $X \in \dcal$, and the proof follows.

\emph{Step 2: $X$ is any \v{C}ech-complete space}. By the first step we deduce that every compact subset of $X$ is scattered.
But any \v{C}ech-complete space $X$ is scattered if and only if every compact subset of $X$ is scattered.
A detailed proof of this probably folklore statement can be found in \cite{STTWW}.
\end{proof}

\begin{proposition}\label{prop:count}
If $X$ is a first-countable compact space, then $X\in\dcal$ if and only if $X$ is countable.
\end{proposition}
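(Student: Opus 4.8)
The plan is to treat the two implications separately: the ``if'' part follows from the scant cover criterion recorded in Remark~\ref{Rem:scant}, while the ``only if'' part will be reduced, via Theorem~\ref{Theor:Main_result}, to a purely topological statement about scattered spaces.

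For ``$X$ countable $\Rightarrow X\in\dcal$'' I would not use compactness at all, showing instead that \emph{every} countable space admits a scant cover. Enumerate $X=\{x_k:k\in\omega\}$ and put $\ncal_{x_k}=X\setminus\{x_0,\dots,x_{k-1}\}$; since finite sets are closed, each $\ncal_{x_k}$ is an open neighbourhood of $x_k$. If $u=x_j$, then $u\in\ncal_{x_k}$ forces $k\le j$, so $u$ belongs to at most $j+1$ of these sets. Hence $\{\ncal_{x_k}:k\in\omega\}$ is a scant cover and $X\in\dcal$ by Remark~\ref{Rem:scant}.

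For the converse, suppose $X$ is first-countable, compact and $X\in\dcal$. By Theorem~\ref{Theor:Main_result} the space $X$ is scattered, so it is enough to prove the following fact, which no longer mentions $\dcal$: \emph{every first-countable compact scattered space is countable.} I would argue by transfinite induction on the scattered height $ht(X)$. Note first that $ht(X)=\mu+1$ is always a successor for a nonempty compact scattered $X$, since the derived sets form a decreasing chain of nonempty compacta, so an intersection over a limit stage cannot be empty. The top level $X^{(\mu)}$ is compact and discrete, hence finite, say $X^{(\mu)}=\{p_1,\dots,p_s\}$; using that $X$ is Hausdorff I pick pairwise disjoint open $U_i\ni p_i$ with $\overline{U_i}\cap X^{(\mu)}=\{p_i\}$. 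For a closed $C\subseteq X$ one has $C^{(\alpha)}\subseteq X^{(\alpha)}$, so the closed set $K=X\setminus\bigcup_i U_i$ satisfies $K^{(\mu)}\subseteq X^{(\mu)}\cap K=\emptyset$; thus $ht(K)<ht(X)$ and $K$ is countable by the inductive hypothesis.

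The remaining and crucial point is that each $\overline{U_i}$ is countable, and this is exactly where first-countability enters. Fix $i$ and a decreasing open base $\{B_k:k\in\omega\}$ at $p_i$ with $B_k\subseteq U_i$ and $\bigcap_k\overline{B_k}=\{p_i\}$. Every $x\in\overline{U_i}$ with $x\ne p_i$ eventually leaves the base, so $\overline{U_i}\setminus\{p_i\}$ is contained in the union of the countably many closed sets $\overline{U_i}\setminus B_0$ and $\overline{B_k}\setminus B_{k+1}$ $(k\in\omega)$. Each of these is a closed subset of $\overline{U_i}$ omitting $p_i$; since $X^{(\mu)}\cap\overline{U_i}=\{p_i\}$, its $\mu$-th derivative is empty, so its height is smaller than $ht(X)$ and it is countable by the inductive hypothesis. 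Therefore $\overline{U_i}$ is a countable union of countable sets, and $X=K\cup\bigcup_i\overline{U_i}$ is countable, completing the induction. I expect this peeling to be the main obstacle: first-countability is precisely what lets a neighbourhood of a top-level point be exhausted by countably many compacta of strictly smaller scattered height, and the argument genuinely breaks down without it. For instance, in the one-point compactification of an uncountable discrete set the corresponding ``annuli'' are uncountable, and that space is an uncountable, compact, scattered space (it admits an obvious scant cover, so it even lies in $\dcal$) which is not first-countable.
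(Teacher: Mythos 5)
Your proof is correct, and at the top level it follows the same route as the paper: for ``countable $\Rightarrow X\in\dcal$'' the paper uses exactly your tail-set scant cover ($\ncal_{x_n}=\{x_i: i\geq n\}$, then Remark~\ref{Rem:scant}), and for the converse it likewise first deduces scatteredness from Theorem~\ref{Theor:Main_result}. The genuine difference is the final step: the paper simply cites the classical Mazurkiewicz--Sierpi\'nski theorem (\cite[Theorem~8.6.10]{Semadeni}) that a first-countable compact space is scattered if and only if it is countable, whereas you reprove the implication you need from scratch by transfinite induction on the scattered height. Your induction checks out: compactness of the decreasing chain of (closed, hence compact) derived sets forces $ht(X)=\mu+1$ to be a successor; $X^{(\mu)}$ is compact discrete, hence finite; the set $K=X\setminus\bigcup_i U_i$ and each of the closed sets $\overline{U_i}\setminus B_0$ and $\overline{B_k}\setminus B_{k+1}$ miss $X^{(\mu)}$, so by monotonicity of the derivative ($A\subseteq B$ implies $A^{(\alpha)}\subseteq B^{(\alpha)}$) their $\mu$-th derivatives are empty and the inductive hypothesis applies; and the covering of $\overline{U_i}\setminus\{p_i\}$ by these countably many compacta is valid because $\bigcap_k\overline{B_k}=\{p_i\}$. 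The only point you gloss over is the existence of such a base: one should note that $X$, being compact Hausdorff, is regular and $T_1$, so a countable base at $p_i$ can be shrunk to satisfy $\overline{B_{k+1}}\subseteq B_k\subseteq U_i$ and $\bigcap_k B_k=\{p_i\}$. The trade-off between the two proofs is clear: the paper's argument is two lines modulo a classical citation, while yours is self-contained and pinpoints exactly where first-countability enters --- as your closing example of the one-point compactification of an uncountable discrete space (compact, scattered, in $\dcal$ via an obvious scant cover, yet uncountable) nicely illustrates.
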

\begin{proof}
If $X \in \dcal$, then $X$ is scattered, by Theorem \ref{Theor:Main_result}.
By the classical theorem
of S. Mazurkiewicz and W. Sierpi\'nski \cite[Theorem~8.6.10]{Semadeni}, a first-countable
compact space is scattered if and only if it is countable. This proves (i) $\Rightarrow$ (ii).
The converse is known  \cite{FKLS}
and follows from the fact that any countable space $X = \{x_n: n \in \omega\}$ admits a scant cover.
Indeed, define $X_n = \{x_i : i \geq n\}$. Then the family $\{X_n:n \in \omega\}$ is a scant cover of $X$.
 Now it suffices to mention Remark \ref{Rem:scant}.
\end{proof}

\begin{remark}\label{Knaster}
Theorem \ref{Theor:Main_result} extends also a well-known result of B. Knaster and K. Urbanik 
stating that every countable \v{C}ech-complete space is scattered \cite{KU}. It is easy to see that a 
countable Baire space contains a dense subset of isolated points, but in general does not have to be scattered.
We don't know whether every Baire $\dcal$-space must have isolated points.
\end{remark}

Recall that an Eberlein compact is a compact space homeomorphic to a subset of a Banach space with the weak topology.
A compact space is said to be a Corson compact space if it can be embedded in a $\Sigma$-product of the real lines.
Every Eberlein compact is Corson, but not vice versa.
However, every scattered Corson compact space is a scattered Eberlein compact space \cite{Alster}.
\begin{theorem}{\rm (\cite{FKLS})}\label{Theor:Corson}
A Corson compact space $X$ belongs to the class $\dcal$ if and only if $X$ is a scattered Eberlein compact space.
\end{theorem}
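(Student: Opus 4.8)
The plan is to prove Theorem \ref{Theor:Corson} by combining the Description Theorem \ref{Theor:description} with the structural facts about scattered Corson compacta recalled above. The statement has two directions. For the easier direction, suppose $X$ is a scattered Eberlein compact space; I would show $X \in \dcal$. The natural tool here is the scant cover (point-finite neighbourhood assignment) from Remark \ref{Rem:scant}, since possessing a scant cover is a sufficient condition for membership in $\dcal$. Scattered Eberlein compacta admit a very rigid structure: by Alster's characterization they carry a $\sigma$-point-finite $T_0$-separating family of clopen sets, and on a scattered space one can stratify by the scattered height $ht(X)$ and assign to each point a basic neighbourhood controlled by that family. The goal would be to manufacture, for each $x \in X$, an open neighbourhood $\mathcal{N}_x$ so that every fixed $u \in X$ lies in only finitely many of the $\mathcal{N}_x$; the $\sigma$-point-finiteness together with scatteredness is exactly what makes such an assignment possible. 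Once the scant cover is in hand, Remark \ref{Rem:scant} gives $X \in \dcal$ immediately.

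For the converse, assume $X$ is a Corson compact space with $X \in \dcal$. I must show $X$ is a scattered Eberlein compact. First, since $X$ is compact and lies in $\dcal$, Theorem \ref{Theor:Main_result} forces $X$ to be scattered. Then $X$ is a scattered Corson compact space, and by the cited result of Alster \cite{Alster} every scattered Corson compact space is automatically a scattered Eberlein compact space. So this direction reduces almost entirely to invoking Theorem \ref{Theor:Main_result} followed by Alster's theorem; the real content has already been extracted in the preceding sections.

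Thus the two halves are asymmetric in difficulty. The converse is nearly a formal consequence of results already available in the excerpt, and I would expect it to occupy only a couple of lines. The forward direction, building the explicit point-finite neighbourhood assignment on an arbitrary scattered Eberlein compact, is where the genuine work lies. The main obstacle I anticipate is verifying the point-finiteness of the neighbourhood assignment globally: it is easy to control neighbourhoods level by level along the scattered decomposition, but one must ensure that when the contributions from all scattered levels are amalgamated, no single point $u$ is covered by infinitely many $\mathcal{N}_x$. This is precisely where the $\sigma$-point-finiteness of the separating family must be leveraged carefully, decomposing the family into countably many point-finite subfamilies and arranging the assignment so that each $u$ meets only finitely many pieces. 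If this bookkeeping can be carried out cleanly, the rest follows directly from Remark \ref{Rem:scant} and Theorem \ref{Theor:description}.
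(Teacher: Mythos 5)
Your proposal takes essentially the same route as the paper: the converse direction via Theorem \ref{Theor:Main_result} followed by Alster's theorem \cite{Alster}, and the forward direction by producing a scant cover on the scattered Eberlein compact and invoking Remark \ref{Rem:scant}. The only difference is that where you sketch a direct construction of the point-finite neighbourhood assignment from the $\sigma$-point-finite separating family, the paper simply cites the proof of \cite[Lemma 1.1]{BM} for the existence of the scant cover, so your outlined bookkeeping is precisely the content delegated to that reference.
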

Bearing in mind Theorem \ref{Theor:Main_result}, to show Theorem  \ref{Theor:Corson} it suffices
 to  use the fact that every scattered Eberlein compact space admits a scant cover (the latter follows from the proof of \cite[Lemma 1.1]{BM})
and  then apply Remark \ref{Rem:scant}.

%\section{Two crucial examples}\label{examples}

Being motivated by the previous results one can ask if there exist scattered compact spaces $X\in\Delta$ which are not Eberlein compact.
 The next question is also crucial: Does there exist a compact scattered space $X \notin \dcal$? 
Below we answer both questions positively.

We need the following somewhat technical

\begin{theorem} \label{Theor:union}
Let $Z = C_0 \cup C_1$ be a Tychonoff space such that
\begin{enumerate}
\item[{\rm (1)}] $C_0 \cap C_1 = \emptyset$.
\item[{\rm (2)}] $C_0$ is an open $F_{\sigma}$ subset of $Z$.
\item[{\rm (3)}] both $C_0$ and $C_1$ belong to the class $\dcal$.
\end{enumerate}
Then $Z$ also belongs to  the class $\dcal$.
\end{theorem}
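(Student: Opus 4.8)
The plan is to verify condition (3) of Theorem~\ref{Theor:description} for $Z$: I will show that every countable disjoint collection of subsets of $Z$ admits a point-finite open expansion in $Z$. Since $C_0$ is an open $F_\sigma$ subset of $Z$, write $C_0 = \bigcup_{k \in \omega} F_k$ where each $F_k$ is closed in $Z$ and, replacing $F_k$ by $\bigcup_{i \le k} F_i$, the sequence may be assumed increasing; note also that $C_1 = Z \setminus C_0$ is closed. Given a countable disjoint family $\{A_n : n \in \omega\}$ of subsets of $Z$, I split each piece along the partition $Z = C_0 \cup C_1$ by setting $A_n^0 = A_n \cap C_0$ and $A_n^1 = A_n \cap C_1$.

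Applying Theorem~\ref{Theor:description} inside each factor: since $C_0 \in \dcal$, the disjoint family $\{A_n^0\}$ has a point-finite open expansion $\{W_n\}$ in $C_0$, and because $C_0$ is open in $Z$ each $W_n$ is automatically open in $Z$. Since $C_1 \in \dcal$, the disjoint family $\{A_n^1\}$ has a point-finite open expansion $\{O_n\}$ in $C_1$; as $C_1$ is a subspace, I choose $V_n$ open in $Z$ with $O_n = V_n \cap C_1$. The obstacle is that the $V_n$ need not be point-finite in $Z$: a point of $C_0$ could lie in infinitely many $V_n$, and this is the one place the whole argument can fail.

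The fix is to trim the $V_n$ against the filtration of $C_0$, putting $U_n = W_n \cup (V_n \setminus F_n)$; each $U_n$ is open in $Z$ because $F_n$ is closed. This is still an expansion: $A_n^0 \subseteq W_n$, while $A_n^1 \subseteq O_n \subseteq V_n$ and $A_n^1 \cap F_n = \emptyset$ (since $A_n^1 \subseteq C_1$ is disjoint from $C_0 \supseteq F_n$), so $A_n = A_n^0 \cup A_n^1 \subseteq U_n$. For point-finiteness I would split on the location of a point $x$. If $x \in C_1$ then $x \notin W_n$ and $x \notin F_n$, so $x \in U_n$ exactly when $x \in O_n$, which happens for only finitely many $n$. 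If $x \in C_0$, let $k$ be least with $x \in F_k$; then $x \in F_n$ for all $n \ge k$, so $x \notin V_n \setminus F_n$ once $n \ge k$, leaving only the finitely many indices $n < k$ together with the finitely many $n$ for which $x \in W_n$. Hence $\{U_n\}$ is point-finite, and Theorem~\ref{Theor:description} yields $Z \in \dcal$. The substantive point is the interplay between the closedness of the $F_n$ (so that removing them keeps the sets open) and the exhaustion $C_0 = \bigcup_n F_n$ (so that every point of $C_0$ is eventually swallowed), which is precisely why the hypothesis that $C_0$ be open and $F_\sigma$ is the right one.
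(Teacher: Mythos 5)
Your proof is correct and follows essentially the same route as the paper's: split each set along $Z = C_0 \cup C_1$, use openness of $C_0$ for one half, and trim the open extensions from the $C_1$ side against the closed sets exhausting $C_0$ (your increasing $F_n$ plays exactly the role of the paper's $\bigcup_{i \leq n} F_i$). The point-finiteness verification, including the key observation that every point of $C_0$ is eventually absorbed by some $F_k$, matches the published argument.
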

\begin{proof} By assumption, $C_0 = \bigcup\{F_n: n \in \omega\}$, where each $F_n$ is closed in $Z$.
Let $\{X_n: n \in \omega\}$ be any countable collection of pairwise disjoint subsets of $Z$.
Our target is to define open sets $U_n \supseteq X_n$, $n \in \omega$ in such a way that 
the collection $\{U_n: n \in \omega\}$ is point-finite.
We decompose the sets $X_n = X_n^{0} \cup X_n^{1}$, where
$X_n^{0} = X_n \cap C_0$ and $X_n^{1} = X_n \cap C_1$.
By Theorem \ref{Theor:description},
the collection $\{X_n^{0}: n \in \omega\}$ expands to a point-finite open collection $\{U_n^{0}: n \in \omega\}$ in $C_0$.
The set $C_0$ is open in $Z$, therefore $U_n^{0}$ are open in $Z$ as well.

Now we consider the disjoint collection $\{X_n^{1}: n \in \omega\}$ in $C_1$. By assumption, $C_1 \in \dcal$, therefore applying Theorem \ref{Theor:description} once more,
we find a point-finite expansion $\{V_n^{1}: n \in \omega\}$ in $C_1$ consisting of sets which are open in $C_1$.
Every set $V_n^{1}$ is a trace of some set $W_n^{1}$, which is open in $Z$, i.e. $V_n^{1}= W_n^{1} \cap C_1$, and every $W_n^{1}$ is open in $Z$.
We refine the sets  $W_n^{1}$ by the formula  $U_n^{1} = W_n^{1} \setminus \bigcup\{F_i: i \leq n\}$.
Since all sets $F_i$ are closed in $Z$, the sets $U_n^{1}$ remain open in $Z$.
Since all sets $F_i$ are disjoint with $C_1$, the collection $\{U_n^{1}: n \in \omega\}$ remains to be an expansion of $\{X_n^{1}: n \in \omega\}$.
Furthermore, the collection $\{U_n^{1}: n \in \omega\}$ is point-finite, because $\{V_n^{1}: n \in \omega\}$ is point-finite,
and every point  $z \in C_0$ belongs to some $F_n$, hence $z \notin U_m^1$ for every $m \geq n$.
Finally, we define $U_n = U_n^{0} \cup U_n^{1}$.
The collection $\{U_n: n \in \omega\}$ is a point-finite open expansion of $\{X_n: n \in \omega\}$, and the proof is complete.
\end{proof}
This yields the following
\begin{corollary} \label{cor:height_2}
Let $Z$ be any separable scattered Tychonoff space such that its scattered height $ht(Z)$ is equal to 2.
Then $Z \in \dcal$.
\end{corollary}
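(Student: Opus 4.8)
The plan is to realize $Z$ as a disjoint union $Z = C_0 \cup C_1$ satisfying the three hypotheses of Theorem \ref{Theor:union}, and then simply invoke that theorem. The natural decomposition is to let $C_0$ be the set of isolated points of $Z$ and $C_1 = Z^{(1)}$ be the set of its non-isolated points. Since every point of $Z$ is either isolated or not, this is a partition, so hypothesis (1) of Theorem \ref{Theor:union} holds automatically. It then remains to check that $C_0$ is open and $F_{\sigma}$, and that both $C_0$ and $C_1$ belong to $\dcal$.

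First I would handle $C_1 = Z^{(1)}$. The assumption $ht(Z) = 2$ unwinds to $Z^{(2)} = (Z^{(1)})^{(1)} = \emptyset$, which says exactly that the subspace $C_1$ has no non-isolated points, i.e. $C_1$ is discrete; and being the complement of the open set of isolated points, $C_1$ is closed in $Z$. Any discrete space lies in $\dcal$: given a disjoint collection, each member is already open and the collection is point-finite, so condition (3) of Theorem \ref{Theor:description} is met. Hence $C_1 \in \dcal$, and note that no openness or $F_{\sigma}$ requirement is imposed on $C_1$ by Theorem \ref{Theor:union}, so its being merely closed causes no difficulty. Turning to $C_0$, the set of isolated points, this is the place where separability enters: a countable dense subset of $Z$ must contain every isolated point, so $C_0$ is countable. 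Being a countable (discrete) space, $C_0$ admits a scant cover and therefore $C_0 \in \dcal$ by Remark \ref{Rem:scant} (equivalently, directly as a discrete space). Moreover $C_0$ is open, and since $Z$ is Tychonoff, hence $T_1$, each singleton is closed, so $C_0$ is a countable union of closed sets, i.e. an open $F_{\sigma}$ subset of $Z$, giving hypothesis (2).

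With hypotheses (1)--(3) of Theorem \ref{Theor:union} verified for $C_0 = $ (isolated points) and $C_1 = Z^{(1)}$, applying that theorem yields $Z \in \dcal$. The only genuinely substantive step is the deduction, from separability, that the set of isolated points is countable; this is precisely what forces the crucial $F_{\sigma}$ property of $C_0$ demanded by Theorem \ref{Theor:union}. Everything else is a routine unpacking of the height-$2$ condition and the trivial fact that discrete (and countable) spaces are $\dcal$-spaces, so I do not expect any serious obstacle beyond recording these observations in order.
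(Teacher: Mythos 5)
Your proof is correct and follows essentially the same route as the paper: the same decomposition of $Z$ into its (countable, open, $F_\sigma$) set of isolated points $C_0$ and the closed discrete set $C_1 = Z^{(1)}$, followed by an application of Theorem \ref{Theor:union}. You merely spell out the details the paper leaves implicit (separability forcing $C_0$ countable, singletons closed by $T_1$ giving $F_\sigma$, discrete and countable spaces lying in $\dcal$), all of which are verified correctly.
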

\begin{proof} The structure of $Z$ is the following. $Z = C_0 \cup C_1$, where $C_0$ is a countable dense in $Z$ set consisting of isolated in $Z$ points and
$C_1$ consists of all accumulation points. Moreover, the space $C_1$ with the topology induced from $Z$ is discrete.
 All conditions of Theorem \ref{Theor:union} are satisfied, and the result follows.
\end{proof}

Our first example will be the one-point compactification of an Isbell--Mr\'owka space $\Psi(\acal)$.
We recall the construction and basic properties of $\Psi(\acal)$.
 Let $\acal$ be an almost disjoint family of subsets of the set of natural numbers $\N$ and let $\Psi(\acal)$ be the set $\N \cup \acal$
equipped with the topology defined as follows. For each $n \in \N$, the singleton $\{n\}$ is open, and
for each $A \in \acal$, a base of neighbourhoods of $A$ is the collection of all sets of the form
$\{A\} \cup B$, where $B\subset A$ and $|A \setminus B| < \omega$. The space $\Psi(\acal)$ is then a
 first-countable separable locally compact Tychonoff space.
If $\acal$ is a maximal almost disjoint (MAD) family, then the corresponding Isbell--Mr\'owka space $\Psi(\acal)$ would be in addition pseudocompact.
(Readers are advised to consult \cite[Chapter 8]{HT-MT} which surveys various topological properties of these spaces).

\begin{theorem} \label{Theor:height_3}
There exists a separable scattered compact space $X$ with the following properties:
\begin{enumerate}
\item[{\rm (a)}] The scattered height of $X$ is equal to 3.
\item[{\rm (b)}] $X \in \dcal$.
\item[{\rm (c)}] $X$ is not an Eberlein compact space.
\end{enumerate}
\end{theorem}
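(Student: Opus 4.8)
The plan is to take $X$ to be the one-point compactification of an Isbell--Mr\'owka space $\Psi(\acal)$, where $\acal$ is an \emph{uncountable} almost disjoint family on $\N$ (for instance a MAD family, which is automatically uncountable). Thus $X = \N \cup \acal \cup \{\infty\}$, with $\N$ the set of isolated points, $\acal$ sitting one level up, and $\infty$ the point at infinity. Since $\Psi(\acal)$ is locally compact, Hausdorff and non-compact, $X$ is compact Hausdorff; and $\N$ is a countable dense subset, so $X$ is separable.

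For (a) I would compute the Cantor--Bendixson derivatives directly. The points of $\N$ are isolated, each $A \in \acal$ is the limit of the infinite set $A \subseteq \N$, and $\infty$ is a limit of $\acal$, so $X^{(1)} = \acal \cup \{\infty\}$. In the subspace $\acal \cup \{\infty\}$ each $A \in \acal$ is isolated, since its trace neighbourhood satisfies $(\{A\} \cup A)\cap(\acal\cup\{\infty\}) = \{A\}$, whereas $\infty$ remains a limit point because every neighbourhood of $\infty$ omits only finitely many members of $\acal$. Hence $X^{(2)} = \{\infty\}$ and $X^{(3)} = \emptyset$, so $X$ is scattered with $ht(X) = 3$.

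For (b) I would apply Theorem \ref{Theor:union} to the partition $X = C_0 \cup C_1$ with $C_0 = \N$ and $C_1 = \acal \cup \{\infty\}$. Here $C_0$ is open, and $F_{\sigma}$ since $\N = \bigcup_{n}\{n\}$ is a countable union of closed singletons; being countable, $C_0 \in \dcal$. For $C_1$, note that $C_1 = X \setminus \N$ is closed in $X$, the subspace $\acal$ is discrete (hence in $\dcal$), and $C_1 \setminus \acal = \{\infty\}$ is finite, so $C_1 \in \dcal$ by Proposition \ref{prop:subspace}. Observe that Corollary \ref{cor:height_2} is \emph{not} available here, because $C_1$ need not be separable when $\acal$ is uncountable, which is precisely why Proposition \ref{prop:subspace} is the right tool. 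All hypotheses of Theorem \ref{Theor:union} being met, $X \in \dcal$.

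The main obstacle is (c), the non-Eberlein property. The key external fact I would invoke is that every separable Corson compact space is metrizable: if $K \subseteq \Sigma(\R^{\Gamma})$ is Corson with countable dense set $D$, then $S = \bigcup_{d \in D}\supp(d)$ is countable and every $x \in K$ is supported in $S$, since for $\gamma \notin S$ the closed set $\{y : y(\gamma)=0\}$ contains $D$ and hence all of $K$; thus $K$ embeds in the metrizable cube $[0,1]^{S}$. As every Eberlein compact is Corson, a separable Eberlein compact space is metrizable. But $X$ contains the uncountable discrete subspace $\acal$, so $X$ is not metrizable, and being separable it cannot be Eberlein compact. I expect parts (a) and (b) to be essentially mechanical given Theorem \ref{Theor:union} and Proposition \ref{prop:subspace}, while (c) carries the real content through the separable-Corson-implies-metrizable argument combined with the uncountability of $\acal$.
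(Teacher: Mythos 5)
Your proposal is correct and follows essentially the same route as the paper: the same space (the one-point compactification of $\Psi(\acal)$ for an uncountable almost disjoint family $\acal$), the same key tools (Theorem \ref{Theor:union} and Proposition \ref{prop:subspace}), and the same non-Eberlein argument via separable Eberlein compacta being metrizable. The only difference is bookkeeping: the paper first gets $\Psi(\acal)\in\dcal$ from Corollary \ref{cor:height_2} (itself an instance of Theorem \ref{Theor:union}) and then adjoins $\infty$ via Proposition \ref{prop:subspace}, whereas you decompose $X$ directly as $\N\cup(\acal\cup\{\infty\})$ and apply Proposition \ref{prop:subspace} inside $C_1$ --- the same argument with the two lemmas applied in the opposite order.
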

\begin{proof} Let $\acal$ be any uncountable almost disjoint (in particular, MAD) family of subsets of $\N$
and let $Z$ be the corresponding first-countable separable locally compact Isbell--Mr\'owka space $\Psi(\acal)$.
 It is easy to see that $Z  = \Psi(\acal)$ satisfies the assumptions of Corollary \ref{cor:height_2}.
Hence, $Z \in \dcal$. Now, denote by $X$ the one-point compactification of the separable locally compact space $Z$.
 Then the scattered height of $X$ is equal to 3. Note that  $X \in \dcal$ by Proposition \ref{prop:subspace}. Moreover,
$X$ is not an Eberlein compact space, since every separable Eberlein compact space is metrizable,
 while $\Psi(\acal)$ is metrizable if and only if $\acal$ is countable.
\end{proof}

Now we show that there exist scattered compact spaces which are not in the class $\dcal$.
We will use the classical Pressing Down Lemma.
Let $[0,\omega_1)$ be the set of all countable ordinals equipped with the order topology. For simplicity, we identify $[0,\omega_1)$ with $\omega_1$.
A subset $S$ of $\omega_1$ is called a {\it stationary} subset if $S$ has nonempty intersection with every closed and unbounded set in $\omega_1$.
A mapping $\varphi: S \rightarrow \omega_1$ is called {\it regressive} if $\varphi(\alpha) < \alpha$ for each $\alpha \in S$.
The proof of the following fundamental statement can be found for instance in \cite{Kunen}.

\begin{theorem}\textbf{Pressing Down Lemma.}\label{Theor:Press}
Let $\varphi: S \rightarrow \omega_1$ be a regressive mapping, where $S$ is a stationary subset of $\omega_1$.
Then for some $\gamma < \omega_1$, $\varphi^{-1}\lbrace{\gamma}\rbrace$ is a stationary subset of $\omega_1$.
\end{theorem}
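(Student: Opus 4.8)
The plan is to argue by contradiction, using the standard machinery of clubs (closed unbounded subsets of $\omega_1$) and diagonal intersections. Suppose no fiber is stationary: that is, for every $\gamma < \omega_1$ the set $S_\gamma := \varphi^{-1}\{\gamma\}$ fails to be stationary. By the very definition of stationarity, for each such $\gamma$ I can fix a closed unbounded set $C_\gamma \subseteq \omega_1$ with $C_\gamma \cap S_\gamma = \emptyset$. The goal is to assemble these $\omega_1$-many clubs into a single club that is incompatible with the regressivity of $\varphi$ on the stationary set $S$.

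First I would form the diagonal intersection $C := \{\alpha < \omega_1 : \alpha \in C_\gamma \text{ for every } \gamma < \alpha\}$. The crucial claim is that $C$ is itself a club. Closedness is routine: if $\alpha$ is a limit of points of $C$ and $\gamma < \alpha$ is given, then all sufficiently large members of $C$ below $\alpha$ lie in $C_\gamma$, and since $C_\gamma$ is closed we get $\alpha \in C_\gamma$; as $\gamma < \alpha$ was arbitrary, $\alpha \in C$. The heart of the matter, and the step I expect to be the main obstacle, is unboundedness. Given any $\beta_0 < \omega_1$, I would build a strictly increasing sequence $\beta_0 < \beta_1 < \beta_2 < \cdots$ by choosing $\beta_{n+1}$ above $\beta_n$ inside $\bigcap \{C_\gamma : \gamma < \beta_n\}$; this is possible because the intersection of the countably many clubs $\{C_\gamma : \gamma < \beta_n\}$ is again a club, hence unbounded. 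Setting $\beta = \sup_n \beta_n < \omega_1$, one checks that $\beta \in C_\gamma$ for every $\gamma < \beta$: any such $\gamma$ satisfies $\gamma < \beta_n$ for some $n$, whence the tail $\beta_{n+1}, \beta_{n+2}, \dots$ lies in $C_\gamma$ and has supremum $\beta$, so $\beta \in C_\gamma$ by closedness. Thus $\beta \in C$ and $\beta > \beta_0$, establishing unboundedness.

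With $C$ confirmed to be a club, the conclusion is immediate. Since $S$ is stationary it meets $C$; pick any $\alpha \in S \cap C$ with $\alpha > 0$. Regressivity gives $\gamma := \varphi(\alpha) < \alpha$, so by the definition of the diagonal intersection $\alpha \in C_\gamma$. On the other hand $\varphi(\alpha) = \gamma$ means precisely $\alpha \in S_\gamma$, contradicting $C_\gamma \cap S_\gamma = \emptyset$. Hence some fiber $\varphi^{-1}\{\gamma\}$ must be stationary, as required.

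A word on where the difficulty really lies: the whole argument is concentrated in the closure properties of the club filter, namely that the intersection of countably many clubs is a club (needed to keep the recursion alive at each finite stage) and that the diagonal intersection of $\omega_1$-many clubs is a club. The subtle point is that an arbitrary intersection of clubs can fail to be unbounded, so one cannot simply intersect all the $C_\gamma$ at once; the diagonal device is exactly what rescues unboundedness by requiring $\alpha \in C_\gamma$ only for $\gamma < \alpha$, and verifying that this membership condition survives passage to the supremum $\beta = \sup_n \beta_n$ is where the countable cofinality built into the $\beta_n$-construction does the essential work.
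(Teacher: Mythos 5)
Your proof is correct and is exactly the standard diagonal-intersection argument: the paper itself gives no proof, citing Kunen, and Kunen's proof of the Pressing Down Lemma is precisely the one you reconstruct (choose a club $C_\gamma$ avoiding each non-stationary fiber, show the diagonal intersection is a club, and derive a contradiction from regressivity on $S$). The one ingredient you assert without proof --- that countable intersections of clubs are clubs --- is standard and you flag it explicitly, so there is no gap.
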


It is known that there are plenty of stationary subsets of $\omega_1$.
In particular, every stationary set can be partitioned into countably many pairwise disjoint stationary sets \cite{Kunen}.
 Note that $\omega_1$ is a scattered locally compact and first-countable space. 
Next statement resolves an open question posed in \cite{FKLS}.
 \begin{theorem}\label{Theor:notD_1}
The compact scattered  space $[0,\omega_1]$ is not in the class $\dcal$.
\end{theorem}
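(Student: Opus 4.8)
The plan is to invoke the characterization in Theorem \ref{Theor:description}: it suffices to exhibit a countable disjoint collection of subsets of $[0,\omega_1]$ that admits no point-finite open expansion, since this negates condition (3). The natural candidate comes from slicing $\omega_1$ into stationary pieces. I would fix a partition $\{S_n : n \in \omega\}$ of the club of all countable limit ordinals into pairwise disjoint stationary subsets of $\omega_1$; such a partition exists by the fact recalled before the statement. Viewing each $S_n$ as a subset of $[0,\omega_1]$, the claim is that this disjoint collection cannot be expanded to a point-finite family of open sets.

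Suppose toward a contradiction that $\{U_n : n \in \omega\}$ is an open expansion in $[0,\omega_1]$ with $S_n \subseteq U_n$ for all $n$. The heart of the argument is to show that each $U_n$ must contain a final segment of $\omega_1$. Fixing $n$, for every $\alpha \in S_n$ the point $\alpha$ is a limit ordinal lying in the open set $U_n$, so there is $\beta_\alpha < \alpha$ with $(\beta_\alpha, \alpha] \subseteq U_n$. The assignment $\alpha \mapsto \beta_\alpha$ is a regressive map on the stationary set $S_n$, so the Pressing Down Lemma (Theorem \ref{Theor:Press}) yields an ordinal $\gamma_n < \omega_1$ and a stationary set $S_n' \subseteq S_n$ on which $\beta_\alpha \equiv \gamma_n$. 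Since $S_n'$ is stationary it is unbounded, whence $\bigcup_{\alpha \in S_n'} (\gamma_n, \alpha] = (\gamma_n, \omega_1)$, and therefore $(\gamma_n, \omega_1) \subseteq U_n$.

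To finish, I would set $\gamma = \sup_n \gamma_n$. As a countable supremum of countable ordinals, $\gamma < \omega_1$, and by the previous paragraph every $U_n$ contains the tail $(\gamma, \omega_1)$. Consequently every point $\delta$ with $\gamma < \delta < \omega_1$ lies in all of the sets $U_n$ simultaneously, which flatly contradicts point-finiteness. Hence no point-finite open expansion of $\{S_n : n \in \omega\}$ exists, and by Theorem \ref{Theor:description} the compact scattered space $[0,\omega_1]$ fails to be a $\Delta$-space.

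I expect the only delicate point to be the bookkeeping at the outset: one must ensure the $S_n$ consist of limit ordinals, so that basic neighbourhoods of the form $(\beta_\alpha,\alpha]$ are genuinely available, and that they remain stationary after this restriction. This is handled precisely by partitioning the club of limit ordinals rather than all of $\omega_1$. Everything after the application of the Pressing Down Lemma — the passage to the tail $(\gamma_n,\omega_1)$ and the countable supremum — is routine.
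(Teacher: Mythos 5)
Your proof is correct and follows essentially the same route as the paper's: partition the club of countable limit ordinals into countably many stationary sets, apply the Pressing Down Lemma to the regressive assignments coming from basic neighbourhoods $(\beta_\alpha,\alpha]$, and take the countable supremum $\gamma=\sup_n\gamma_n$ to contradict point-finiteness via Theorem \ref{Theor:description}. The only (harmless) difference is that you extract the slightly stronger conclusion that each $U_n$ contains the whole tail $(\gamma_n,\omega_1)$, whereas the paper uses unboundedness of the stationary sets $T_n$ only to place the single point $\gamma$ in every $U_n$.
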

\begin{proof} It suffices to show that $\omega_1$ does not belong to the class  $\dcal$.
Assume, on the contrary, that $\omega_1 \in \dcal$.
Denote by $L$ the set of all countable limit ordinals.
Evidently, $L$ is a closed unbounded set in $\omega_1$.
Take any representation of $L$ as the union of countably many pairwise disjoint stationary sets $\{S_n: n\in \omega\}$.
By Theorem \ref{Theor:description}, there exists a point-finite open expansion $\{U_n: n \in \omega\}$ in $\omega_1$.

Fr every  $\alpha \in U_n$ there is an ordinal $\beta(\alpha) < \alpha$ such that $[\beta(\alpha) , \alpha] \subset U_n$.
 In fact, for every $n \in \omega$ we can define a regressive mapping $\varphi_n: S_n \rightarrow \omega_1$
by the formula: $\varphi_n (\alpha) = \beta(\alpha)$. Since $S_n$ is a stationary set for every $n$, we can apply to $\varphi_n$ the Pressing Down Lemma.
Hence, for each $n$ there are a countable ordinal $\gamma_n$ and an uncountable subset $T_n \subset S_n$ with the following property:
$[\gamma_n , \alpha] \subset U_n$ for every $\alpha \in T_n$.
Denote $\gamma =\sup\{\gamma_n: n \in \omega\} \in \omega_1$. Because all $T_n$ are unbounded, for all natural $n$ we have an ordinal $\alpha_n \in T_n$
such that $\gamma < \alpha_n$ and $[\gamma_n , \alpha_n] \subset U_n$.
This implies that $\gamma \in U_n$ for every $n \in \omega$. However,  a collection $\{U_n: n \in \omega\}$ is point-finite.
The obtained contradiction finishes the proof.
\end{proof}

The function space $C_{k}(X)$ is called \emph{Asplund} if every separable vector subspace of $C_{k}(X)$ isomorphic to a Banach space,
 has the separable dual.
\begin{proposition}\label{Asplund}
If a Tychonoff space $X$ belongs to the class $\dcal$, then the space $C_{k}(X)$ is Asplund.
The converse conclusion fails in general.
\end{proposition}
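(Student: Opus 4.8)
The plan is to reduce the Asplund property of $C_k(X)$ to the scatteredness of the compact subsets of $X$ and then to invoke the results already established. The bridge is the classical Namioka--Phelps theorem: for a compact space $K$, the Banach space $C(K)$ is Asplund if and only if $K$ is scattered. The point where $X \in \dcal$ enters is the observation that then \emph{every} compact subspace $K \subseteq X$ is scattered: indeed $K \in \dcal$ by Corollary \ref{cor:subspace}, and a compact space is \v{C}ech-complete, so $K$ is scattered by Theorem \ref{Theor:Main_result}. Thus the whole forward implication rests on transferring scatteredness of the compact subsets of $X$ into an Asplund conclusion about arbitrary separable Banach subspaces of $C_k(X)$.

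The key technical step, which I expect to be the main obstacle, is to localize an arbitrary separable Banach subspace $E$ of $C_k(X)$ onto a single compact subset of $X$. First I would use Kolmogorov's normability criterion: since $E$ carries a Banach, hence locally bounded, topology inherited from the compact-open topology, $E$ possesses a bounded neighbourhood of $0$, which I may shrink to one of the form $\{f \in E : \sup_{x \in K}|f(x)| < \eps\}$ for a single compact $K \subseteq X$ (the union of the finitely many compacta defining a basic neighbourhood). Boundedness of this set forces the seminorm $p_K(f) = \sup_{x\in K}|f(x)|$ to dominate every other seminorm $p_{K'}$ on $E$, so $p_K\restriction_E$ is in fact a norm equivalent to the Banach norm of $E$. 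Consequently the restriction map $r : E \to C(K)$, $r(f) = f\restriction_K$, is an isomorphism onto its image; as $E$ is complete, $r(E)$ is a \emph{closed} subspace of $C(K)$. Since $K$ is scattered, $C(K)$ is Asplund by Namioka--Phelps, and the Asplund property passes to closed subspaces, so $E$ is an Asplund Banach space; being separable, $E$ has separable dual. This shows that $C_k(X)$ is Asplund whenever $X \in \dcal$.

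For the failure of the converse I would exhibit $X = [0,\omega_1]$. By Theorem \ref{Theor:notD_1} this compact space does not belong to $\dcal$. On the other hand it is scattered, and since $X$ is compact the space $C_k(X) = C(X)$ coincides with the Banach space $C([0,\omega_1])$, which is Asplund by the Namioka--Phelps theorem. Every subspace of an Asplund space is again Asplund, so in particular every separable Banach subspace of $C_k(X)$ has separable dual; hence $C_k(X)$ is Asplund while $X \notin \dcal$, and the converse fails. The only delicate point I foresee is the normability reduction of the previous paragraph: everything else is either the cited classical $C(K)$-theorem or a direct application of Corollary \ref{cor:subspace} and Theorem \ref{Theor:Main_result}.
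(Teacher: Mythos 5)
Your proof is correct and takes essentially the same route as the paper: both use the boundedness of the unit ball of a Banach subspace $E$ of $C_k(X)$ to localize $E$ onto compact subsets of $X$ (scattered by Corollary \ref{cor:subspace} and Theorem \ref{Theor:Main_result}), then invoke the classical theorem that $C(K)$ is Asplund for scattered compact $K$, and both refute the converse with $X=[0,\omega_1]$ via Theorem \ref{Theor:notD_1}. The only (cosmetic) difference is that you restrict directly to a single compact set $K=\bigcup_{j}K_j$ via the map $f\mapsto f\restriction_K$, whereas the paper embeds $C_k(X)$ into the product $\prod_{K}C_k(K)$ and projects onto a finite subproduct identified with $C_k\bigl(\bigoplus_{j\in F}K_j\bigr)$ --- your version is, if anything, slightly more streamlined.
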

\begin{proof}
Let $\mathcal{K}(X)$ be the family of all compact subset of $X$. By the assumption and Corollary \ref{cor:subspace},
 each  $K\in\mathcal{K}(X)$ belongs to the class $\dcal$. Clearly, $C_{k}(X)$ is  isomorphic to a (closed) subspace
of the  product $\Pi=\prod_{K\in\mathcal{K}(X)}C_{k}(K)$ of Banach spaces $C_{k}(K)$.
 Assume that $E$ is a separable vector subspace of $C_{k}(X)$ isomorphic to a Banach space.
 Observe that $E$ is isomorphic to a subspace of the finite product $\prod_{j\in F}C_{k}(K_{j})$ for  $K_{j}\in \mathcal{K}(X)$ and  $j\in F$.
Indeed, let $B$ be the unit (bounded) ball of the normed space $E$. Then there exists a finite set $F$ such that
 $\bigcap_{j\in F}\pi^{-1}_{j}(U_{j})\cap \Pi\subset B$, where $U_{j}$ are balls in spaces $C_{k}(K_{j})$, $j\in F$,
and $\pi_{j}$ are natural projections from $E$ onto $C_{k}(K_{j})$. Let $\pi_{F}$ be the (continuous) projection
 from $\Pi$ onto $\prod_{\j\in F}C_{k}(K_{j})$. Then  $\pi_{F}\restriction_E$ is an injective continuous and open map from $E$
onto $(\pi_{F}\restriction_E)(E)\subset\prod_{\j\in F}C_{k}(K_{j})$.  The injectivity of $\pi_{F}\restriction_E$  follows from the fact
 that $B$ is a bounded neighbourhood of zero in $E$. It is easy to see that the image $(\pi_{F}\restriction_E)(B)$
is an open neighbourhood of zero in $\prod_{j\in F}C_{k}(K_{j})$. On the other hand, $\prod_{j\in F}C_{k}(K_{j})$
is isomorphic to the space $C_{k}(\bigoplus_{j\in F}K_{j})$ and the compact space  $\bigoplus_{j\in F}K_{j}$ is  scattered.
 By the classical \cite[Theorem 12.29]{fabian} $E$ must have the separable dual $E^{*}$. Hence, $C_{k}(X)$ is Asplund.
The converse fails, as Theorem \ref{Theor:notD_1} shows for $X=[0,\omega_1]$.
\end{proof}
Since every infinite compact scattered space $X$ contains a nontrivial converging sequence, for such $X$ 
 the  Banach space $C(X)$ is not a Grothendieck space, (see \cite{dales}).
\begin{corollary}
If $X$ is an infinite compact and $X \in \dcal$, then the Banach space $C(X)$ is not a Grothendieck space.
 The converse fails, as $X=[0,\omega_{1}]$ applies.
\end{corollary}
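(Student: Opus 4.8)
The plan is to obtain both assertions as immediate consequences of the material already developed in this section, with no new machinery required. The forward implication will be a concatenation of Theorem \ref{Theor:Main_result} with the Grothendieck criterion for spaces $C(X)$ recalled in the sentence preceding the statement (see \cite{dales}), while the failure of the converse will be witnessed, exactly as for the Asplund property in Proposition \ref{Asplund}, by the ordinal space $X = [0,\omega_1]$.

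First I would treat the forward direction. Let $X$ be infinite, compact and $X \in \dcal$. By Theorem \ref{Theor:Main_result} every compact $\dcal$-space is scattered, hence $X$ is an infinite compact scattered space. Such a space carries a nontrivial convergent sequence, and by the criterion quoted above this already forces $C(X)$ to fail the Grothendieck property: if $x_n \to x$ with all $x_n$ distinct, then the Dirac functionals $\delta_{x_n}$ converge to $\delta_x$ in the weak$^{*}$ topology of $C(X)^{*}$ but not in the weak topology. This settles the first assertion.

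Next I would refute the converse implication, that is, the statement that $C(X)$ being not a Grothendieck space should imply $X \in \dcal$. The space $X = [0,\omega_1]$ is infinite, compact and scattered, so by the same cited criterion $C([0,\omega_1])$ is not a Grothendieck space; for instance the finite ordinals converge to $\omega$, so $\delta_n \to \delta_\omega$ in the weak$^{*}$ topology but not weakly. On the other hand, Theorem \ref{Theor:notD_1} asserts that $[0,\omega_1] \notin \dcal$. Thus $[0,\omega_1]$ realizes the conclusion of the corollary while lying outside $\dcal$, which is precisely the failure of the converse.

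The proof carries no genuine obstacle, since every ingredient is already in place. The one point that deserves a word is the verification that the hypotheses of the cited Grothendieck criterion are met in both halves, namely that an infinite compact scattered space does contain a nontrivial convergent sequence; granting this, which is exactly the content invoked in the sentence before the statement, the forward direction reduces to Theorem \ref{Theor:Main_result} and the converse is refuted by Theorem \ref{Theor:notD_1} applied to $[0,\omega_1]$.
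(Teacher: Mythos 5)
Your proposal is correct and is essentially the paper's own (implicit) argument: the forward direction is exactly Theorem \ref{Theor:Main_result} combined with the fact stated immediately before the corollary that an infinite compact scattered space contains a nontrivial convergent sequence and hence $C(X)$ is not Grothendieck (see \cite{dales}), while the failure of the converse is witnessed by $X=[0,\omega_{1}]$, which is infinite compact scattered (so $C(X)$ is not Grothendieck) yet lies outside $\dcal$ by Theorem \ref{Theor:notD_1}. No step is missing, and your explicit check that $\delta_{x_n}\to\delta_x$ weak$^{*}$ but not weakly is just an unwinding of the cited criterion.
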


For non-scattered spaces $X$ Theorem \ref{Theor:Main_result} implies immediately the following
\begin{corollary}\label{cor:beta}
If $X$ is a non-scattered space,  the Stone-\v{C}ech compactification  $\beta X$ is not in the class $\dcal$.
\end{corollary}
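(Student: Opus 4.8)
The plan is to argue by contraposition, using Theorem \ref{Theor:Main_result} together with the fact that $\beta X$ is compact and that scatteredness is inherited by subspaces. First I would suppose, toward a contradiction, that $\beta X \in \dcal$. Since $\beta X$ is a compact space, Theorem \ref{Theor:Main_result} applies directly and yields that $\beta X$ is scattered.

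Next I would invoke the elementary fact that every subspace of a scattered space is scattered. Concretely, if $A$ is any nonempty subset of a subspace $Y \subseteq \beta X$, then $A$ is also a subset of $\beta X$, so by scatteredness of $\beta X$ the set $A$ has a point isolated in $A$; because the relative topology on $A$ induced from $Y$ coincides with that induced from $\beta X$, this point remains isolated in $A$ viewed as a subset of $Y$. Applying this observation with $Y = X$, which sits inside $\beta X$ as a topological subspace, shows that $X$ is scattered.

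This contradicts the standing hypothesis that $X$ is non-scattered, so the assumption $\beta X \in \dcal$ cannot hold, and the corollary follows. I expect no real obstacle here: essentially all the work is already carried out in Theorem \ref{Theor:Main_result}, and the only supplementary ingredient is the routine heredity of scatteredness under passage to subspaces, which uses nothing about $\beta X$ beyond the fact that $X$ embeds in it.
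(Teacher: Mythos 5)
Your proof is correct and matches the paper's intent: the paper derives Corollary \ref{cor:beta} as an immediate consequence of Theorem \ref{Theor:Main_result} applied to the compact space $\beta X$, exactly as you do, with the hereditary character of scatteredness supplying the step back to $X$. Your explicit verification of that heredity is routine but fine, so there is nothing to fix.
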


\begin{proposition}\label{prop:remainder}
Let $X=\beta Z \setminus Z$, where $Z$ is any infinite discrete space.
Then $X$ is not in the class $\dcal$.
\end{proposition}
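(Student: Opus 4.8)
The plan is to reduce the statement to Theorem \ref{Theor:Main_result} by showing that $X = \beta Z \setminus Z$ is a \emph{compact, non-scattered} space. Since $Z$ is discrete, every point of $Z$ is isolated in $\beta Z$, so $Z$ is open in $\beta Z$; consequently $X = \beta Z \setminus Z$ is closed in the compact space $\beta Z$ and hence compact. It therefore suffices to prove that $X$ is not scattered, and for this I would establish the stronger fact that $X$ has no isolated points.

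The heart of the argument is this no-isolated-points step, and I would carry it out using the ultrafilter description of $\beta Z$. Recall that the points of $\beta Z$ are the ultrafilters on $Z$, that $Z$ corresponds to the principal ultrafilters, and that $X$ consists exactly of the free ultrafilters, while the clopen sets $A^{\ast} = \{p \in \beta Z : A \in p\}$, for $A \subseteq Z$, form a base. Suppose, toward a contradiction, that some $p \in X$ is isolated in $X$. Then there is $A \subseteq Z$ with $A^{\ast} \cap X = \{p\}$; since $p$ is a free ultrafilter, the condition $A \in p$ forces $A$ to be infinite. But the closure $A^{\ast}$ is homeomorphic to $\beta A$, with $A^{\ast} \cap Z = A$, so that $A^{\ast} \cap X = \beta A \setminus A$. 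For any infinite set $A$ the remainder $\beta A \setminus A$ carries more than one free ultrafilter (indeed $2^{2^{|A|}}$ of them), which contradicts $A^{\ast} \cap X = \{p\}$. Hence $X$ has no isolated points.

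A nonempty space with no isolated points is not scattered, so $X$ is a compact non-scattered space, and by the contrapositive of Theorem \ref{Theor:Main_result} we conclude $X \notin \dcal$, as required.

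I expect the no-isolated-points step to be the only real obstacle; once it is in place, compactness and the reduction to Theorem \ref{Theor:Main_result} are immediate. An alternative to that step would be to invoke Theorem \ref{Theor:scattered} and instead exhibit a continuous surjection of $X$ onto $[0,1]$, but verifying crowdedness directly from the ultrafilter description is cleaner and works uniformly regardless of the cardinality of $Z$.
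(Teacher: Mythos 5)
Your proposal is correct and matches the paper's argument: the published proof consists precisely of the observation that $\beta Z \setminus Z$ has no isolated points (hence is a compact non-scattered space, to which Theorem \ref{Theor:Main_result} applies). You have simply supplied the details the paper leaves implicit, including the standard ultrafilter verification that the remainder is crowded.
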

\begin{proof}
$\beta Z \setminus Z$ does not have isolated points for any infinite discrete space $Z$.
\end{proof}

It is known that $X= [0, \omega_1]$ is the Stone-\v{C}ech compactification of $[0,\omega_1)$. We showed that $X \notin \dcal$.
Also, $\beta Z \notin \dcal$ for any infinite discrete space $Z$.
Every scattered Eberlein compact space belongs to the class $\dcal$ by Theorem \ref{Theor:Corson},  however
 no Eberlein compact $X$ can be the Stone-\v{C}ech compactification
$\beta Z$ for any proper subset $Z$ of $X$ by the Preiss--Simon theorem (see \cite[Theorem IV.5.8]{Arch}).
All these facts provides a motivation for the following result.

\begin{example}\label{example:beta}
There exists an Isbell--Mr\'owka space $Z$ which is {\it almost compact}
in the sense that the one-point compactification of $Z$ coincides with $\beta Z$ (see \cite[Theorem 8.6.1]{HT-MT}).
Define $X = \beta Z$. Then $X \in \dcal$, by Theorem \ref{Theor:height_3}.
\end{example}
%%%%%%%%%%%%%%%%%%%%%
\section{Metrizable spaces $X \in \dcal$}\label{metr}

 In this section we try to describe
constructively the structure of nontrivial metrizable spaces $X \in \dcal$.
Note first that every scattered metrizable $X$ is in the class $\dcal$ since every such space $X$
homeomorphically embeds into a scattered Eberlein compact \cite{BL}, and then Theorem \ref{Theor:Corson} and Corollary
\ref{cor:subspace} apply. We extend this result as follows.

A topological space $X$ is said to be \emph{$\sigma$-scattered}  if $X$ can be represented as a countable union of scattered subspaces
and $X$ is called \emph{strongly $\sigma$-discrete} if it is a union of countably many of its closed discrete subspaces.
Strongly $\sigma$-discreteness of $X$ implies that $X$ is $\sigma$-scattered, for any topological space.
For metrizable $X$, by the classical result of A. H. Stone \cite{Stone}, these two properties are equivalent.

\begin{proposition}\label{prop:sigma_scat} Any $\sigma$-scattered metrizable space belongs to the class $\Delta$.
\end{proposition}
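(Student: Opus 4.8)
The plan is to construct a scant cover of $X$ and then quote Remark \ref{Rem:scant}, which already reduces the conclusion $X\in\dcal$ to the existence of such a cover. First I would invoke the cited theorem of Stone: since $X$ is metrizable and $\sigma$-scattered, it is strongly $\sigma$-discrete, so I may write $X=\bigcup_{k\in\omega}D_k$ with every $D_k$ closed and discrete. Passing to $E_k=\bigcup_{j\le k}D_j$, I may assume the sequence is increasing, because a finite union of closed discrete subsets of a metric space is again closed and discrete (closedness is clear, and isolation of each point follows by intersecting the finitely many separating neighbourhoods). For $x\in X$ let $n(x)=\min\{k:x\in E_k\}$ record the first level at which $x$ appears; note $x\notin E_{n(x)-1}$ by definition (with $E_{-1}=\emptyset$).

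The key tool I would use is that metrizable spaces are collectionwise normal. For each fixed $k$ the singletons $\{e\}$, $e\in E_k$, form a discrete family of closed sets, so there is a discrete family of open sets $\{V^k_e:e\in E_k\}$ with $e\in V^k_e$. I then assign to each $x\in X$ the neighbourhood obtained by stripping off the lower levels,
\[
\mathcal N_x \;=\; V^{\,n(x)}_x \setminus E_{\,n(x)-1}.
\]
Since $E_{\,n(x)-1}$ is closed and $x\notin E_{\,n(x)-1}$, each $\mathcal N_x$ is an open neighbourhood of $x$, so $\{\mathcal N_x:x\in X\}$ is a genuine neighbourhood assignment.

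It then remains to verify the scant (point-finite) condition: for a fixed $u\in X$ the set $\{x:u\in\mathcal N_x\}$ is finite. Writing $m=n(u)$, I would argue as follows. If $u\in\mathcal N_x$ then $u\notin E_{\,n(x)-1}$, which (as the $E_k$ increase) forces $n(x)\le m$; hence only the finitely many levels $k=0,1,\dots,m$ can contribute an index $x$. For a single such level $k$, every contributing $x$ with $n(x)=k$ satisfies $u\in V^k_x$, and because $\{V^k_e:e\in E_k\}$ is a discrete family, $u$ lies in at most one of its members. Thus each level yields at most one $x$, so $|\{x:u\in\mathcal N_x\}|\le m+1<\omega$, and the cover is scant.

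I expect the main obstacle to be orchestrating the two independent mechanisms that together force point-finiteness, using the single assignment $x\mapsto\mathcal N_x$. The level-stripping $\setminus E_{\,n(x)-1}$ is what confines the contributing indices to the finitely many levels not exceeding $n(u)$, while the discreteness of each family $\{V^k_e\}$—supplied by collectionwise normality of metric spaces—is what caps the contribution of each individual level at one. Everything else (Stone's theorem, the closedness and discreteness of the finite unions $E_k$, and the openness of $\mathcal N_x$) is routine; the crux is that these two estimates hold simultaneously for the same neighbourhoods.
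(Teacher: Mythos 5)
Your proof is correct, but it takes a genuinely different route from the paper's. Both arguments open the same way, invoking Stone's theorem to pass from $\sigma$-scattered to strongly $\sigma$-discrete, $X=\bigcup_k D_k$ with each $D_k$ closed and discrete. From there the paper does not construct a scant cover at all: it observes that strong $\sigma$-discreteness makes \emph{every} subset of $X$ an $F_\sigma$ (any subset of a closed discrete set is again closed and discrete), and then verifies condition (2) of Theorem \ref{Theor:description} directly --- given a countable partition $\{X_n\}$ with $X_n=\bigcup_m F_{n,m}$, the sets $U_0=X$ and $U_n = X\setminus\bigcup\{F_{k,m}: k<n,\ m<n\}$ form a point-finite open expansion by a simple diagonal argument. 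You instead build a point-finite neighbourhood assignment $\mathcal N_x = V^{n(x)}_x\setminus E_{n(x)-1}$ from collectionwise normality and quote Remark \ref{Rem:scant}; your verification is sound, and I note that the two mechanisms do mesh as you intend: stripping $E_{n(x)-1}$ confines contributing indices to the levels $k\le n(u)$, while pairwise disjointness of $\{V^k_e : e\in E_k\}$ (full discreteness is not actually needed, and in a metric space one can even get the disjoint expansion directly with balls $B(e,r_e)$, $B(e,2r_e)\cap E_k=\{e\}$) caps each level at one index. The trade-offs: the paper's lemma is softer and more portable in one direction --- ``every subset is $F_\sigma$ implies $X\in\Delta$'' applies verbatim to spaces with no separation hypotheses beyond Tychonoff, e.g.\ to $Q$-sets, which is relevant to Section \ref{metr} --- whereas your construction proves the formally stronger conclusion that $X$ admits a scant cover (a point-finite neighbourhood assignment, which in general is stronger than membership in $\Delta$), and it generalizes beyond metrizability to any strongly $\sigma$-discrete collectionwise normal space.
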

\begin{proof} In view of aforementioned equivalence, every subset of $X$ is $F_{\sigma}$. 
If every subset of $X$ is $F_{\sigma}$, then $X \in \Delta$.
This fact apparently is well-known (see also a comment after Claim \ref{Claim1}).
For the sake of completeness we include a direct argument.
  We show that  $X$ satisfies  the condition (2) of Theorem \ref{Theor:description}.
 Let $\{X_n: n \in \omega \}$ be any countable disjoint partition of $X$.
Denote $X_n = \bigcup\{F_{n,m}: m \in \omega\}$, where each $F_{n,m}$ is closed in $X$.
Define open sets $U_n$ as follows: $U_0 = X$ and
$U_n = X \setminus \bigcup\{F_{k,m} : k < n, m < n\}$ for $n \geq 1$.
Then $\{U_n: n \in \omega \}$ is a point-finite open expansion of $\{X_n: n \in \omega \}$ in $X$.
\end{proof}

A metrizable space $A$ is called an \emph{absolutely analytic} if 
$A$ is homeomorphic to a Souslin subspace of a complete metric space $X$ (of an arbitrary weight), 
i.e. $A$ is expressible as 
$A = \bigcup_{\sigma \in \N^\N} \bigcap_{n \in \N} A_{\sigma|n}$,
where each $A_{\sigma|n}$ is a closed subset of $X$.
It is known that every absolutely analytic metrizable space $X$ (in particular, every Borel subspace of a complete metric space)
 either contains a homeomorphic copy of the Cantor set or it is strongly $\sigma$-discrete.
Therefore, for absolutely analytic metrizable space $X$ the converse is true: $X \in \dcal$ implies that $X$ is strongly $\sigma$-discrete \cite{FKLS}.

However, the last structural result can not be proved in general for all (separable) metrizable spaces without extra set-theoretic assumptions.
Let us recall several definitions of special subsets of the real line $\R$ (see \cite{Miller_survey}, \cite{Reed}).
\begin{enumerate}
\item[\rm (a)] A $Q$-set $X$ is a subset of $\R$ such that
each subset of $X$ is $F_{\sigma}$, or, equivalently, each subset of $X$ is $G_{\delta}$ in $X$.
\item[\rm (b)] A $\lambda$-set $X$  is a subset of $\R$ such that
each countable $A\subset X$ is $G_{\delta}$ in $X$.
\item[\rm (c)] A $\Delta$-set $X$ is a subset of $\R$ such that
 for every decreasing sequence $\{D_n: n \in \omega\}$
subsets of $X$ with empty intersection there is a decreasing expansion $\{V_n: n \in \omega\}$ consisting of open
subsets of $X$ with empty intersection.
\end{enumerate}

\begin{Claim}\label{Claim1}
The existence of an uncountable separable metrizable $\Delta$-space is equivalent to the existence of an uncountable $\Delta$-set.
\end{Claim}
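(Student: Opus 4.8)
The plan is to prove the two implications separately, with essentially all of the content in the forward direction.

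The reverse implication is immediate. An uncountable $\Delta$-set is, by item (c), an uncountable subset $X \subseteq \R$ every decreasing sequence of whose subsets with empty intersection admits a decreasing open expansion with empty intersection; such an $X$ is separable and metrizable, and its defining property is verbatim the condition of Definition \ref{def:Delta}. Hence $X$ is already an uncountable separable metrizable $\Delta$-space, and nothing more is needed.

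For the forward implication, let $X$ be an uncountable separable metrizable $\Delta$-space. The organising observation is that, by Corollary \ref{cor:subspace}, every subspace of $X$ is again a $\Delta$-space. It therefore suffices to find an uncountable subspace $Y \subseteq X$ that is homeomorphic to a subspace of $\R$: since being a $\Delta$-space is a topological property, the homeomorphic image of $Y$ in $\R$ is an uncountable subset of $\R$ which is a $\Delta$-space, i.e.\ an uncountable $\Delta$-set. To obtain the embedding into $\R$ I would arrange that $Y$ be zero-dimensional, because a zero-dimensional separable metrizable space embeds into the Cantor set $2^\omega$, which in turn embeds into $\R$.

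Thus the whole argument reduces to the following purely topological lemma, which I expect to be the main obstacle: every uncountable separable metrizable space contains an uncountable zero-dimensional subspace. I would first pass to the condensation points, discarding the points having a countable neighbourhood (there are only countably many, by second countability); this lets me assume every nonempty open subset of $X$ is uncountable, and I would fix an embedding $X \subseteq [0,1]^\omega$. When $X$ is finite-dimensional the lemma is clean: by the classical decomposition theorem, $\dim X = n$ implies $X = Z_0 \cup \dots \cup Z_n$ with each $\dim Z_i = 0$, and since $X$ is uncountable one of the finitely many $Z_i$ must be an uncountable zero-dimensional subspace. The delicate point is the infinite-dimensional case: here the natural target is an uncountable $Y \subseteq X$ whose projection to each coordinate of $[0,1]^\omega$ contains no interval, so that $Y$ lies inside a countable product of zero-dimensional spaces and is itself zero-dimensional; the difficulty is to control all countably many coordinates simultaneously without collapsing $Y$ to a countable set (a coordinate-by-coordinate thinning threatens an empty limit). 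I would resolve this either by reducing the infinite-dimensional case to the finite-dimensional one, via extraction of an uncountable finite-dimensional subspace, or by invoking the corresponding classical result from dimension theory. This lemma is where essentially all the work lies; the remaining steps, namely the transport of the $\Delta$-property through the embedding into $\R$, are formal.
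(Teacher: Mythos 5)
Your reverse implication is fine, and your reduction of the forward implication to finding an uncountable zero-dimensional subspace would indeed suffice (subspaces of $\Delta$-spaces are $\Delta$-spaces by Corollary \ref{cor:subspace}, and zero-dimensional separable metrizable spaces embed into $\R$). The gap is the key lemma itself: it is \emph{not} a theorem of ZFC that every uncountable separable metrizable space contains an uncountable zero-dimensional subspace, and the ``corresponding classical result from dimension theory'' you hope to invoke in the infinite-dimensional case does not exist. Under CH one builds a counterexample by a Luzin-type recursion of length $\omega_1$: every zero-dimensional subspace of the Hilbert cube $[0,1]^{\omega}$ is contained in a zero-dimensional $G_{\delta}$-subset (Tumarkin's theorem), there are only $\mathfrak{c}=\aleph_{1}$ such $G_{\delta}$-sets, and no countable family of zero-dimensional sets covers $[0,1]^{\omega}$ (the Hilbert cube is not countable-dimensional); so one may pick points $x_{\alpha}$, $\alpha<\omega_{1}$, with $x_{\alpha}$ avoiding the first $\alpha$ zero-dimensional $G_{\delta}$-sets in an enumeration, obtaining an uncountable space all of whose zero-dimensional subspaces are countable. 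By the decomposition theorem you yourself quote, all finite-dimensional subspaces of this space are then countable as well, so your fallback of extracting an uncountable finite-dimensional subspace fails in the same model. In short, the step you flagged as ``the main obstacle'' is precisely where the argument breaks.

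The strategy can be repaired, but only by using the $\Delta$-hypothesis, which your lemma ignores: by the result quoted in item (4) of Section \ref{metr} (no $\Delta$-space with a countable network has cardinality $\mathfrak{c}$), an uncountable separable metrizable $\Delta$-space $M$ satisfies $|M|<\mathfrak{c}$, and every separable metrizable space of cardinality less than $\mathfrak{c}$ is zero-dimensional (embed it into $[0,1]^{\omega}$; each coordinate projection has size $<\mathfrak{c}$ and so omits a countable dense set $Q_n$, whence $M$ sits inside the zero-dimensional product $\prod_n\bigl([0,1]\setminus Q_n\bigr)$); thus $M$ itself embeds into $\R$. Note that under CH the forward implication is vacuous --- no uncountable separable metrizable $\Delta$-space exists then --- which is why the CH counterexample above does not contradict the Claim. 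The paper sidesteps dimension theory entirely: it embeds $M$ into $\R^{\omega}$, uses that $\R^{\omega}$ is a one-to-one continuous image of the irrationals $\P$, and observes that the $\Delta$-property pulls back through a one-to-one continuous map $f\colon X\to M$ (push a decreasing sequence forward, take an open expansion in $M$, and pull it back; $f^{-1}$ preserves openness, monotonicity and empty intersection), yielding the desired uncountable $\Delta$-set $X\subseteq\R$ directly.
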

\begin{proof} Note that every separable metrizable space homeomorphically embeds into a Polish space $\R^{\omega}$ 
and the latter space is a one-to-one continuous image of the set of irrationals $\P$. Therefore, if $M$ is an uncountable separable metrizable space, then there exist
 an uncountable set $X \subset \R$ and a one-to-one continuous mapping from $X$ onto $M$. It is easy to see that $X$ is a $\Delta$-set provided $M$ is a $\Delta$-space.
\end{proof}

Note that in the original definition of a $\Delta$-set, G. M. Reed used $G_{\delta}$-sets instead of open sets and
 E. van Douwen observed that these two versions are equivalent \cite{Reed}. From the original definition it is obvious that each $Q$-set must be a $\Delta$-set.
The fact that every $\Delta$-set is a $\lambda$-set is known as well.
K. Kuratowski showed that in ZFC there exist uncountable $\lambda$-sets.
The existence of an uncountable $Q$-set is one of the fundamental set-theoretical problems considered by many authors.
F. Hausdorff showed that the cardinality of an uncountable $Q$-set $X$ has to be strictly smaller than the continuum $\cont= 2^{\aleph_0}$,
 so in models of ZFC plus the Continuum Hypothesis (CH) there are no uncountable $Q$-sets.
Let us outline several known most relevant facts.

(1) Martin's Axiom plus the negation of the Continuum Hypothesis (MA $+ \lnot$CH) implies
 that every subset $X \subset \R$ of cardinality less than $\cont$ is a $Q$-set (see \cite{FM}).

(2) It is consistent that there is a $Q$-set $X$ such that its square $X^2$ is not a $Q$-set \cite{F_squares}.

(3) The existence of an uncountable $Q$-set is equivalent to the existence of an uncountable strong $Q$-set, i.e. a $Q$-set all finite
powers of which are $Q$-sets \cite{P}.

(4) No $\Delta$-set $X$ can have cardinality $\cont$ \cite{P1}.
Hence, under MA, every subset of $\R$ that is a $\Delta$-set is also a $Q$-set.
Recently we proved the following claim: If $X$ has a countable network and $|X|=\cont$,
then $C_p(X)$ is not distinguished \cite{FKLS}. In view of our Theorem \ref{Theor:description} this fact means that
 no $\Delta$-space $X$ with a countable network can have cardinality $\cont$.
\footnote{ The referee kindly informed the authors that
the last result can be derived easily from the actual argument of \cite{P1}.}

(5) It is consistent that there exists a $\Delta$-set $X$ that is not a $Q$-set \cite{Knight}.
Of course, there are plenty of nonmetrizable $\Delta$-spaces with non-$G_{\delta}$ subsets, in ZFC.

(6) An uncountable $\Delta$-set exists if and only if there exists a separable countably paracompact nonnormal Moore space (see \cite{FRW} and \cite{P1}).

Summarizing, the following conclusion is an immediate consequence of our Theorem \ref{Theor:description} and the known facts about $\Delta$-sets listed above.

\begin{corollary} \label{cor:Moore}\mbox{}
\begin{enumerate}
\item[\rm (1)] The existence of an uncountable separable metrizable space such that $C_p(X)$ is distinguished,
 is independent of  ZFC.
\item[\rm (2)] There exists an uncountable separable metrizable space $X$ such that $C_p(X)$ is distinguished,
if and only if there exists a separable countably paracompact nonnormal Moore space.
\end{enumerate}
\end{corollary}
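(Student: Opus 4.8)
The plan is to translate both statements entirely into the language of $\Delta$-sets of the real line, where the cited set-theoretic facts apply directly. The two bridges are Theorem \ref{Theor:description}, by which $C_p(X)$ is distinguished exactly when $X$ is a $\Delta$-space, and Claim \ref{Claim1}, by which an uncountable separable metrizable $\Delta$-space exists if and only if an uncountable $\Delta$-set exists. Combining these, the single assertion ``there is an uncountable separable metrizable $X$ with $C_p(X)$ distinguished'' is equivalent, provably in ZFC, to ``there is an uncountable $\Delta$-set $X \subset \R$''. Everything then reduces to analyzing the latter.

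For part (2), I would simply append one more equivalence to this chain: by item (6) in the list above (see \cite{FRW} and \cite{P1}), an uncountable $\Delta$-set exists if and only if a separable countably paracompact nonnormal Moore space exists. Reading the chain from end to end yields precisely the biconditional claimed in (2), and no further work is needed.

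For part (1), it suffices to exhibit two models of ZFC: one in which an uncountable $\Delta$-set exists and one in which it does not; the equivalence established above then transfers independence to the statement about $C_p(X)$. For non-existence, I would work under CH. Any uncountable subset of $\R$ has cardinality at least $\aleph_1$, and under CH this forces cardinality exactly $\cont$; since no $\Delta$-set can have cardinality $\cont$ (item (4) above, \cite{P1}), there is no uncountable $\Delta$-set in any model of ZFC $+$ CH. For existence, I would work under MA $+ \lnot$CH. There every subset of $\R$ of cardinality less than $\cont$ is a $Q$-set (item (1), \cite{FM}), and every $Q$-set is a $\Delta$-set; choosing any subset of $\R$ of size $\aleph_1 < \cont$ thus produces an uncountable $\Delta$-set. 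As both CH and MA $+ \lnot$CH are consistent with ZFC, the existence of an uncountable $\Delta$-set, and hence of an uncountable separable metrizable $X$ with $C_p(X)$ distinguished, is independent of ZFC.

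The argument is essentially bookkeeping once Theorem \ref{Theor:description} and Claim \ref{Claim1} are in place; the only point demanding care is the cardinality step under CH, where one must observe that ``uncountable'' upgrades to ``cardinality $\cont$'' precisely because CH collapses the interval between $\aleph_1$ and $\cont$ to a single value, so that the cardinality obstruction of item (4) applies uniformly to every uncountable candidate at once.
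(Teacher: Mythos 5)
Your proposal is correct and takes essentially the same route as the paper, which presents Corollary \ref{cor:Moore} as an immediate consequence of Theorem \ref{Theor:description}, Claim \ref{Claim1}, and the listed facts (1), (4), and (6) about $\Delta$-sets --- exactly the chain of equivalences you assemble. Your explicit handling of the two models (under CH, every uncountable subset of $\R$ has cardinality $\cont$ and so cannot be a $\Delta$-set by \cite{P1}; under MA $+\lnot$CH, any set of size $\aleph_1$ is a $Q$-set by \cite{FM}, hence a $\Delta$-set) is precisely the bookkeeping the paper leaves implicit.
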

%%%%%%%%%%%%%%%%%%
\section{Basic operations in $\dcal$ and open problems}\label{problems}

In this section we consider the question whether the class $\Delta$ is invariant under the following basic topological operations:
 subspaces, continuous images, quotient continuous images, finite/countable unions, finite products.

\emph{1. Subspaces}. Trivial because of Corollary \ref{cor:subspace}.

\emph{2. (Quotient) continuous images}. Evidently, every topological space is a continuous image of a discrete one.
The following assertion is a consequence of a known fact about MAD families (see \cite[Chapter 8]{HT-MT}).
\begin{proposition}\label{prop:locomp}
There exists a first-countable separable pseudocompact locally compact Isbell--Mr\'owka space $\Psi(\acal)$
which admits a continuous surjection onto the segment $[0,1]$.
\end{proposition}
Thus, the class $\Delta$ is not invariant under continuous images even for separable locally compact spaces.
However, one can show that every uncountable quotient continuous image of any Isbell--Mr\'owka space $\Psi(\acal)$ satisfies the conditions of
Corollary \ref{cor:height_2}, therefore it is a $\Delta$-space.
Note also that a class of scattered Eberlein compact spaces preserves continuous images.
We were unable to resolve the following major open problem.

\begin{problem}\label{Problem_1} Let $X$ be any compact $\Delta$-space and $Y$ be a continuous image of $X$.
Is $Y$ a $\Delta$-space?
\end{problem}

Even a more general question is open.
\begin{problem}\label{Problem_2} Let $X$ be any $\Delta$-space and $Y$ be a quotient continuous image of $X$.
Is $Y$ a $\Delta$-space?
\end{problem}

Towards a solution of these problems we obtained several partial positive results.
\begin{proposition}\label{prop:quotientmap} Let $X$ be any $\Delta$-space and $\varphi: X \to Y$ be a quotient continuous surjection with only
finitely many nontrivial fibers. Then $Y$ is also a $\Delta$-space.
\end{proposition}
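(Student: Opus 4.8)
The plan is to peel off the finitely many collapsed points and reduce everything to Proposition~\ref{prop:subspace}. Write $F=\{y\in Y:\ |\varphi^{-1}(y)|\geq 2\}$ for the set of images of the nontrivial fibers; by hypothesis $F$ is finite. Put $C=\varphi^{-1}(F)$, the union of these finitely many fibers. Since $Y$ is $T_{1}$ (indeed Tychonoff, by the standing convention), each singleton $\{y\}$ is closed, hence each fiber $\varphi^{-1}(y)$ is closed in $X$, and therefore the finite union $C$ is closed; consequently $X\setminus C$ is open in $X$ and saturated, being exactly $\varphi^{-1}(Y\setminus F)$.

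The crucial step is to identify $Y\setminus F$ with a subspace of $X$. Since $F$ is finite it is closed, so $Y\setminus F$ is open in $Y$, and the restriction $\varphi\res_{X\setminus C}\colon X\setminus C\to Y\setminus F$ is again a quotient map (the restriction of a quotient map to the preimage of an open set is a quotient map). By the very definition of $F$, every fiber of $\varphi$ lying over $Y\setminus F$ is a singleton, so this restriction is a continuous bijection. A bijective quotient map is a homeomorphism: if $V\subseteq X\setminus C$ is open then $(\varphi\res_{X\setminus C})^{-1}\big(\varphi\res_{X\setminus C}(V)\big)=V$ is open, whence $\varphi\res_{X\setminus C}(V)$ is open by the quotient property. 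Thus $Y\setminus F$ is homeomorphic to the subspace $X\setminus C$ of $X$. I expect this to be the main point requiring care --- the place where the finiteness of the nontrivial fibers is genuinely used, since it is precisely what turns $\varphi$ into a bijection (and hence a homeomorphism) away from $C$.

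With this identification the conclusion is immediate. Because $X\in\dcal$, Corollary~\ref{cor:subspace} gives $X\setminus C\in\dcal$, and therefore $Y\setminus F\in\dcal$. Now $F$ is a finite set with $Y\setminus(Y\setminus F)=F$, so Proposition~\ref{prop:subspace}, applied with $Y$ in the role of the ambient space and $Y\setminus F$ in the role of the $\Delta$-subspace whose complement is finite, yields $Y\in\dcal$, as required.
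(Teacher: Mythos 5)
Your proof is correct and follows essentially the same route as the paper: remove the closed union $C$ of the nontrivial fibers, observe that the restricted quotient map $\varphi\res_{X\setminus C}$ is a bijective quotient map and hence a homeomorphism onto the open set $Y\setminus F$, apply Corollary~\ref{cor:subspace}, and finish with Proposition~\ref{prop:subspace}. The only difference is that you spell out the steps the paper labels ``easy to see'' (closedness of the fibers via $T_1$, and that a quotient map restricted to the preimage of an open set is quotient), which is fine.
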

\begin{proof} By assumption, there exists a closed subset $K \subset X$ such that $\varphi(K)$ is finite and $\varphi\restriction_{X\setminus K}: X\setminus K \to Y\setminus \varphi(K)$ 
is a one-to-one mapping. Both sets $X\setminus K$ and $Y\setminus \varphi(K)$ are open in $X$ and $Y$, respectively. Since $\varphi$ is a quotient continuous mapping,
it is easy to see that $\varphi\restriction_{X\setminus K}$ is a homeomorphism. $X\setminus K$ is a $\Delta$-space, hence $Y\setminus \varphi(K)$ is also a $\Delta$-space.
 Finally, $Y$ is a $\Delta$-space, by Proposition \ref{prop:subspace}.
\end{proof}

\begin{proposition}\label{prop:closedmap} Let $X$ be any $\Delta$-space and $\varphi: X \to Y$ be a closed continuous surjection with finite fibers.
Then $Y$ is also a $\Delta$-space.
\end{proposition}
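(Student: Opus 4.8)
The plan is to verify condition (3) of Theorem \ref{Theor:description} directly for $Y$: starting from an arbitrary countable disjoint collection $\{Y_n : n \in \omega\}$ of subsets of $Y$, I will manufacture a point-finite open expansion $\{W_n : n \in \omega\}$ in $Y$. The natural strategy is to transport the problem upstairs to $X$, solve it there using the hypothesis $X \in \Delta$, and then transport the solution back downstairs, where the closedness of $\varphi$ is exactly what is needed to recover openness.

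First I would pull the collection back. Set $X_n = \varphi^{-1}(Y_n)$. Since $\varphi$ is a function and the $Y_n$ are pairwise disjoint, the sets $X_n$ form a pairwise disjoint collection in $X$. Because $X$ is a $\Delta$-space, Theorem \ref{Theor:description} furnishes a point-finite open expansion $\{U_n : n \in \omega\}$ in $X$ with $X_n \subseteq U_n$ for every $n$.

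The key step is the pushforward. The honest image $\varphi(U_n)$ need not be open, so instead I would define
\[
W_n = Y \setminus \varphi(X \setminus U_n) = \{y \in Y : \varphi^{-1}(y) \subseteq U_n\}.
\]
Since $U_n$ is open, $X \setminus U_n$ is closed, and as $\varphi$ is a closed map, $\varphi(X \setminus U_n)$ is closed in $Y$; hence each $W_n$ is open. The inclusion $Y_n \subseteq W_n$ is immediate, since for $y \in Y_n$ the whole fiber satisfies $\varphi^{-1}(y) \subseteq \varphi^{-1}(Y_n) = X_n \subseteq U_n$, so $y \in W_n$. Thus $\{W_n : n \in \omega\}$ is an open expansion of $\{Y_n : n \in \omega\}$. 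For point-finiteness, suppose some $y$ belonged to infinitely many $W_n$; the fiber $\varphi^{-1}(y)$ is nonempty by surjectivity, so fixing any $x \in \varphi^{-1}(y)$ we would get $x \in U_n$ for infinitely many $n$, contradicting point-finiteness of $\{U_n\}$. Hence $\{W_n\}$ is point-finite, and Theorem \ref{Theor:description} gives $Y \in \Delta$.

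The only real content is the pushforward, and I expect that to be the crux: producing open sets of $Y$ from the $U_n$ is impossible through direct images, and it is precisely closedness of $\varphi$ that makes the complement construction $W_n = Y \setminus \varphi(X \setminus U_n)$ open. I note that the finiteness of the fibers is not actually exploited beyond ensuring the fibers are nonempty (which already follows from surjectivity); the argument above in fact shows that \emph{any} closed continuous surjection carries the class $\Delta$ forward, so the finite-fiber hypothesis may be regarded as a convenient sufficient condition rather than an essential one.
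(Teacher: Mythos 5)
Your proof is correct, and the core of it is identical to the paper's: pull the disjoint collection back through $\varphi$, expand it in $X$ via Theorem \ref{Theor:description}, and push the solution down by the complement construction $W_n = Y \setminus \varphi(X \setminus U_n)$, which is exactly the paper's $V_n$; closedness of $\varphi$ is used in both arguments precisely to make these sets open. The genuine difference is in the point-finiteness check, and it is in your favour. The paper argues that $\{n : y \in V_n\}$ is controlled by the union, over all points $x$ of the fiber $\varphi^{-1}(y)$, of the finite sets $\{n : x \in U_n\}$, and then invokes finiteness of the fiber to conclude that this union is finite. You instead note that $y \in W_n$ forces the \emph{entire} fiber into $U_n$, so fixing a single $x_0 \in \varphi^{-1}(y)$ (nonempty by surjectivity) already gives $\{n : y \in W_n\} \subseteq \{n : x_0 \in U_n\}$, which is finite by point-finiteness of $\{U_n\}$. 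Consequently your closing observation is right: the finite-fiber hypothesis is never used, and your argument actually proves that the class $\Delta$ is preserved by arbitrary closed continuous surjections (onto Tychonoff spaces, the paper's standing convention). This is strictly stronger than the stated proposition, and it is not an idle strengthening: since every continuous surjection from a compact space onto a Hausdorff space is automatically closed, your version answers Problem \ref{Problem_1} (continuous images of compact $\Delta$-spaces) affirmatively --- a consequence that the paper's slightly coarser bookkeeping over the whole fiber obscures, and which explains why the authors present the finite-fiber case only as a partial result.
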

\begin{proof} Let $\{Y_n: n\in\omega\}$ be a partition of $Y$. By assumption, 
the partition $\{\varphi^{-1}(Y_n): n\in\omega\}$ admits a point-finite open expansion $\{U_n: n\in\omega\}$ in $X$.
 Clearly, $\varphi(X\setminus U_n)$ are closed sets in $Y$. Define $V_n = Y\setminus \varphi(X\setminus U_n)$ for each $n\in\omega$.
We have that $\{V_n: n\in\omega\}$ is an open expansion of $\{Y_n: n\in\omega\}$ in $Y$.
It remains to verify that the family $\{V_n: n\in\omega\}$ is point-finite.
Indeed, let $y\in Y$ be any point. Each point in the fiber $\varphi^{-1}(y)$ belongs to a finite number of  sets $U_n$. Since the fiber $\varphi^{-1}(y)$ is finite,
$y$ is contained only in a finite number of sets $V_n$ which finishes the proof.  
\end{proof}

\emph{3. Finite/countable unions}. 

\begin{proposition}\label{prop:unions} Assume that $X$ is a finite union of closed subsets $X_i$, where each $X_i$ belongs to the class $\Delta$.
Then $X$ also belongs to $\Delta$. In particular, a finite union of compact $\Delta$-spaces is also a $\Delta$-space.
\end{proposition}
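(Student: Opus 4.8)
The plan is to reduce the finite union to the two-set case and then invoke Theorem~\ref{Theor:union}. Since a finite union can be built up one set at a time, it suffices to prove the following: if $X = X_0 \cup X_1$ where $X_0$ and $X_1$ are closed in $X$ and both belong to $\dcal$, then $X \in \dcal$. The obvious difficulty is that Theorem~\ref{Theor:union} requires the two pieces to be \emph{disjoint}, with one of them an open $F_\sigma$; here the pieces are closed and may overlap. So first I would disjointify: set $C_1 = X_1$ and $C_0 = X \setminus X_1$. Then $C_0 \cap C_1 = \emptyset$ and $Z = C_0 \cup C_1 = X$, so conditions (1) of Theorem~\ref{Theor:union} holds trivially.

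Next I would verify the remaining hypotheses of Theorem~\ref{Theor:union} for this decomposition. Since $X_1$ is closed in $X$, its complement $C_0 = X \setminus X_1$ is open in $X$. Because $C_0 \subseteq X_0$ and $X_0 \in \dcal$, Corollary~\ref{cor:subspace} gives $C_0 \in \dcal$; and $C_1 = X_1 \in \dcal$ by assumption, so condition (3) is satisfied. The one point needing genuine attention is condition (2): I must check that $C_0$ is an \emph{$F_\sigma$} subset of $X$, not merely open. Here I would use that $X_1$ is closed and appeal to the fact that in the ambient Tychonoff space $X$ the open set $C_0$ need not automatically be $F_\sigma$, so this is where I expect the main obstacle to lie.

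To handle the $F_\sigma$ requirement I would work the decomposition the other way. Rather than splitting off $X_1$, I would write $C_0 = X_0 \setminus X_1$ and $C_1 = X_1$. Now $C_1 = X_1$ is closed, hence $C_0 = X \setminus X_1$ is open as before, but to see $C_0$ is $F_\sigma$ I would instead observe that the cleanest route avoids Theorem~\ref{Theor:union} entirely and argues directly from condition~(2) of Theorem~\ref{Theor:description}. Given a countable partition $\{X_n : n \in \omega\}$ of $X$, I would intersect it with each closed piece $X_i$ to get partitions of $X_i$, obtain point-finite open expansions $\{U_n^{(i)} : n \in \omega\}$ inside each $X_i$ by the hypothesis $X_i \in \dcal$, and then lift these to $X$. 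Since each $X_i$ is closed, an expansion open in $X_i$ need not be open in $X$, so the lifting is the crux: I would use normality-type separation in the finitely many closed pieces, or shrink the $U_n^{(i)}$ away from the other pieces, to produce genuinely open sets $U_n \supseteq X_n$ in $X$ whose union over $i$ stays point-finite because a finite sum of point-finite families is point-finite.

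The main obstacle, then, is exactly the passage from relatively open expansions inside the closed pieces $X_i$ to open expansions in $X$, while preserving both the containment $X_n \subseteq U_n$ and point-finiteness. I expect that reducing to two pieces and applying Theorem~\ref{Theor:union} (after verifying the $F_\sigma$ condition for the open complement, which holds whenever the closed piece is a zero-set or the space is suitably nice) is the shortest path, with induction on the number of pieces closing out the general finite union; the final sentence about compact $\dcal$-spaces is then immediate since compact sets are closed.
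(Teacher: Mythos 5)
There is a genuine gap: you correctly locate the crux --- passing from expansions that are open in the closed pieces $X_i$ to an expansion open in $X$ --- but none of your proposed ways around it actually works. The route through Theorem~\ref{Theor:union} is blocked for good, as you half-suspect: $C_0 = X\setminus X_1$ is open but in a general Tychonoff space need not be $F_\sigma$, and your fallback (``the closed piece is a zero-set or the space is suitably nice'') proves only special cases, not the proposition. The direct route is likewise unresolved: ``normality-type separation'' is unavailable ($X$ is merely Tychonoff), and ``shrinking the $U_n^{(i)}$ away from the other pieces'' fails outright when the pieces overlap --- if $X_n\subseteq X_0\cap X_1$, removing $X_1$ from the lift of $U_n^{(0)}$ and $X_0$ from the lift of $U_n^{(1)}$ destroys the containment $X_n\subseteq U_n$. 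So the proposal ends exactly where the proof has to begin.

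The missing idea is the one the paper uses: Proposition~\ref{prop:closedmap}, proved immediately before this statement. The natural map from the discrete (disjoint topological) sum $Z=\bigoplus_i X_i$ onto $X$ is a continuous surjection with finite fibers, and it is \emph{closed} precisely because each $X_i$ is closed in $X$; since $Z\in\dcal$ trivially, Proposition~\ref{prop:closedmap} transfers membership in $\dcal$ to $X$. The correct ``lifting'' hidden in that proposition is a complement trick, not a shrink-and-union: given point-finite expansions $\{V_n^i: n\in\omega\}$ of $\{X_n\cap X_i: n\in\omega\}$ by sets relatively open in $X_i$, set $U_n=\bigcap_i \bigl(X\setminus (X_i\setminus V_n^i)\bigr)$. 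Each $X_i\setminus V_n^i$ is closed in $X$, so $U_n$ is open in $X$; $X_n\subseteq U_n$ because a point of $X_n$ lying in $X_i$ lies in $V_n^i$ and a point outside $X_i$ misses $X_i\setminus V_n^i$ anyway; and if $x\in U_n$ then $x\in V_n^i$ for every $i$ with $x\in X_i$, so point-finiteness of each family $\{V_n^i\}_n$ yields point-finiteness of $\{U_n\}_n$, since there are only finitely many $i$. With this construction spelled out (or simply by citing Proposition~\ref{prop:closedmap} applied to $Z\to X$), your reduction to finitely many pieces and the closing remark that compact subsets are closed both go through.
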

\begin{proof} Denote by $Z$ the discrete finite union of $\Delta$-spaces $X_i$. Obviously, $Z$ is a $\Delta$-space which admits a natural
closed continuous mapping onto $X$. Since all fibers of this mapping are finite, the result follows from Proposition \ref{prop:closedmap}.
\end{proof}

We recall a definition of the Michael line. The Michael line $X$ is the refinement of the real line $\R$ obtained by isolating all irrational points.
So, $X$ can be represented as a countable disjoint union of singletons (rationals) and an open discrete set.
 Nevertheless, the Michael line $X$ is not in $\Delta$ \cite{FKLS}.
This example and Proposition \ref{prop:unions} justify the following

\begin{problem}\label{Problem_3} Let $X$ be a countable union of compact subspaces $X_i$ such that each $X_i$ belongs to the class $\Delta$.
Does $X$ belong to the class $\Delta$?
\end{problem}

\emph{4. Finite products}. We already mentioned earlier that the existence of a $Q$-set $X \subset \R$  such that its square $X^2$ is not a $Q$-set,
is consistent with ZFC. 

\begin{problem}\label{Problem_4} Is the existence of a $\Delta$-set $X \subset \R$ such that its square $X^2$ is not a $\Delta$-set,
consistent with ZFC?
\end{problem}

It is known that the finite product of scattered Eberlein compact spaces is a scattered Eberlein compact.
\begin{problem}\label{Problem_5} Let $X$ be the product of two compact spaces $X_1$ and $X_2$ such that each $X_i$ belongs to the class $\Delta$.
Does $X$ belong to the class $\Delta$?
\end{problem}

Our last problem is inspired by Theorem \ref{Theor:height_3}. 
\begin{problem}\label{Problem_6} Let $X$ be any scattered compact space with a finite scattered height.
Does $X$ belong to the class $\dcal$?
\end{problem}

\textbf{Acknowledgements.} The authors thank Michael Hru\v s\'ak for a useful information about Isbell--Mr\'owka spaces.

%%%%%%%%%%%%%%

\end{document}